\numberwithin{equation}{section}
\newtheorem{thm}{Theorem}[section]
\newtheorem{lemma}[thm]{Lemma}
\newtheorem{prop}[thm]{Proposition}
\theoremstyle{definition}
\newtheorem{defn}[thm]{Definition}
\newtheorem{ntn}[thm]{Notation}
\theoremstyle{remark}
\newtheorem{eg}[thm]{Example}
\newtheorem{rmk}[thm]{Remark}
\newcommand{\supp}{\operatorname{supp}}
\newcommand{\NN}{\mathbb{N}}
\newcommand{\RR}{\mathbb{R}}
\newcommand{\TT}{\mathbb{T}}
\newcommand{\id}{\operatorname{id}}
\newcommand{\Oo}{\mathcal{O}}
\newcommand{\dom}{\mathrm{dom}}
\newcommand{\ran}{\mathrm{ran}}
\title{Twisted Topological Graph Algebras Are Twisted Groupoid $C^*$-algebras}
\date{23 Jan 2017}
\author{Alex Kumjian}
\email{alex@unr.edu}
\address{Department of Mathematics, University of Nevada (084), Reno, NV, 89557, United States}
\author{Hui Li}
\email{lihui8605@hotmail.com}
\address{Research Center for Operator Algebras and Shanghai Key Laboratory of Pure Mathematics and Mathematical Practice, Department of Mathematics, East China Normal University, 3663 Zhongshan North Road, Putuo District, Shanghai, 200062, CHINA}
\subjclass[2010]{46L05, 22A22}
\keywords{$C^*$-algebra; topological graph; principal circle bundle; twisted topological graph algebra; Renault-Deaconu groupoid; twisted groupoid $C^*$-algebra}
\thanks{The second author is the corresponding author.}
\thanks{The first author was partially supported by Simons Collaboration Grant \#\,353626. The second author was supported by Research Center for Operator Algebras of East China Normal University and was supported by Science and Technology Commission of Shanghai Municipality (STCSM), grant No. 13dz2260400.}
\begin{document}

\begin{abstract}
In \cite{HuiLiTwisted1} the second author showed how Katsura's construction of the $C^*$-algebra of a topological graph $E$ may be twisted by a Hermitian line bundle $L$ over the edge space $E^1$.   
The correspondence defining the algebra is obtained as the completion of the compactly supported continuous sections of $L$.  We prove  that the resulting $C^*$-algebra is isomorphic to a twisted groupoid $C^*$-algebra
where the underlying groupoid is the Renault-Deaconu groupoid of the topological graph with Yeend's boundary path space as its unit space.

\end{abstract}

\maketitle

\section{Introduction}

Graph algebras have been the object of much research in operator algebras over the last twenty years or so.
Various generalizations have also been introduced and studied by numerous authors.  These include  higher-rank graph algebras introduced by Pask and the first author (see \cite{KumjianPask:NYJM00}); topological graph algebras due to Katsura (see \cite{Katsura:TAMS04}); $C^*$-algebras arising from topological quivers due to Muhly and Tomforde (see \cite{MuhlyTomforde:IJM05}); and topological higher-rank graph algebras due to Yeend (see \cite{RSWY, Yeend:JOT07, Yeend:CM06}).
These generalizations have significantly broadened the class of $C^*$-algebras brought into focus.
Twisted versions of these $C^*$-algebras have also been proposed and studied recently.
Twisted higher-rank graph algebras were introduced in \cite{MR2948223, MR3335414} where the twisting is determined by a $\mathbb{T}$-valued $2$-cocycle.
Deaconu et al.\ studied the cohomology of a groupoid  determined by a singly generated dynamical system   and the associated twisted groupoid $C^*$-algebras (see \cite{DeaconuKumjianEtAl:JOT01}).
Twisted topological graph algebras which generalize both  Katsura's topological graph algebras and the twisted groupoid $C^*$-algebras investigated in \cite{DeaconuKumjianEtAl:JOT01} were introduced by the second author in \cite{HuiLiTwisted1}.

Katsura's topological graphs may be regarded as an abstract dynamical representation of a Pimsner module
(see \cite{Pimsner:FIC97}).
The class of topological graph algebras have potential application to the classification of $C^*$-algebras because many properties of topological graph algebras may be inferred from properties of the underlying graphs.
Moreover, topological graphs provide models for many classifiable $C^*$-algebras. Indeed, topological graph algebras include all graph algebras, all crossed products of the form $C_0(T) \rtimes_\alpha \mathbb{Z}$ (see \cite{Katsura:TAMS04}), all AF-algebras, all A$\mathbb{T}$-algebras, many AH-algebras, Renault-Deaconu groupoid $C^*$-algebras arising from a singly generated dynamical system, etc. (see \cite{Katsura:IJM06}).  By a celebrated result  all simple, separable, nuclear, purely infinite $C^*$-algebras satisfying the UCT are topological graph algebras (see \cite{Katsura:JFA08}).

Twisted topological graph algebras also have applications to the field of noncommutative geometry.
Recently, Kang et al.\  proved that all quantum Heisenberg manifolds may be realized as twisted topological graph algebras (see \cite{MR3303906}).

A \emph{partial local homeomorphism} on a locally compact Hausdorff space $T$ is defined to be a local homeomorphism $\sigma: \dom(\sigma) \to \ran(\sigma)$ where $\dom(\sigma), \ran(\sigma)$ are open subsets of $T$.
The pair $(T,\sigma)$ is called a \emph{singly generated dynamical system}.
Given a singly generated dynamical system $(T,\sigma)$, one may define the \emph{Renault-Deaconu groupoid} $\Gamma(T,\sigma)$ which is  both \'{e}tale and amenable (see \cite{Deaconu:TAMS95, Renault:00}).

Recall that graph algebras associated to row-finite directed graphs with no sources were realized  as Renault-Deaconu groupoid $C^*$-algebras (see  \cite{KumjianPaskEtAl:PJM98, KumjianPaskEtAl:JFA97}).
Note that  the $C^*$-algebra of an arbitrary graph is not defined as a groupoid $C^*$-algebra but as the universal  $C^*$-algebra of a family of generators indexed by the vertices and edges of a graph subject to Cuntz-Krieger type relations (see \cite{BatesHongEtAl:IJM02, DrinenTomforde:RMJM05, FowlerLacaEtAl:PAMS00, RaeburnSzyma'nski:TAMS04}, etc).
Katsura's definition of topological graph algebras is based on a modified model of Cuntz-Pimsner algebras (see \cite{Katsura:JFA04, Pimsner:FIC97}). He showed in \cite{MR2563503} that when vertex and edge spaces of a topological graph are both compact and the range map is surjective, then the topological graph algebra is isomorphic to a Renault-Deaconu groupoid $C^*$-algebra, and conjectured  that this is true more generally.
Yeend proved that every topological graph algebra is indeed a groupoid $C^*$-algebra (see \cite{Yeend:CM06}).

Our main result in the present work (see Theorem~\ref{twisted top graph alg iso to twisted gpoid C*-alg}) is that every twisted topological graph algebra is isomorphic to a twisted groupoid $C^*$-algebra
(see Definition~\ref{define the twisted groupoid C^*-alg}) and that the underlying groupoid is indeed the canonical Renault-Deaconu groupoid associated to a shift map  with Yeend's boundary path space as its unit space (this was implicit in Yeend's work but requires some work to tease out).
This result implies that every topological graph algebra is isomorphic to a Renault-Deaconu groupoid $C^*$-algebra,
thereby confirming Katsura's conjecture.

We start this paper with three equivalent definitions of twisted topological graph algebras in Section~2.
Then in Section~3 we recall from \cite{Katsura:TAMS04, HuiLiTwisted1} some basic terminology of topological graphs and some fundamental results about twisted topological graph algebras.
In Section~4, we introduce a notion of boundary path which is based on Webster's definition in the case of a directed graph (see \cite{MR3119197}), and prove that our definition coincides with Yeend's definition of boundary path of a topological higher-rank graphs when restricted to topological $1$-graph (see \cite{Yeend:CM06}). In Section~5 we use Katsura's factor map technique from \cite{Katsura:IJM06} to construct homomorphisms between twisted topological graph algebras. In Section~6, we obtain the relationship between principal circle bundles over the domain of a partial local homeomorphism and topological twists over the Renault-Deaconu groupoid arising from the given partial local homeomorphism. We conclude in Section~7 by proving our main result, Theorem~\ref{twisted top graph alg iso to twisted gpoid C*-alg}, which says that every twisted topological graph algebra is isomorphic to a twisted groupoid $C^*$-algebra where the underlying groupoid is the Renault-Deaconu groupoid of the topological graph discussed above.

\section{Three Equivalent Definitions}

In this section, we recall the notion of twisted topological graph algebras introduced by Li in \cite{HuiLiTwisted1} and also give other equivalent descriptions of this type of $C^*$-algebras.

\begin{defn}[{\cite{Katsura:TAMS04}}]\label{define the top graph}
A quadruple $E=(E^0,E^1,r,s)$ is called a \emph{topological graph} if $E^0, E^1$ are locally compact Hausdorff spaces, $r:E^1 \to E^0$ is a continuous map, and $s:E^1\to E^0$ is a local homeomorphism.
\end{defn}

Now we introduce the construction of twisted topological graph algebras from different point of views. Our construction involves $C^*$-correspondences and Cuntz-Pimsner algebras (see \cite{Katsura:JFA04, Lance:Hilbert$C*$-modules95, Pimsner:FIC97, Raeburn:Graphalgebras05, RaeburnWilliams:Moritaequivalenceand98}, etc).

Let $E$ be a topological graph, let $\mathbf{N}=\{N_\alpha\}_{\alpha \in \Lambda}$ be an open cover of $E^1$, and let $\mathbf{S}=\{s_{\alpha\beta} \in C(\overline{N_{\alpha\beta}},\mathbb{T})\}_{\alpha, \beta \in \Lambda}$ be a \emph{1-cocycle}, which is a collection of circle-valued continuous functions such that $s_{\alpha\beta}s_{\beta\gamma}=s_{\alpha\gamma}$ on $\overline{N_{\alpha\beta\gamma}}$. Suppose that $x, y \in \prod_{\alpha \in \Lambda} C(\overline{N_\alpha})$ satisfy $x_\alpha=s_{\alpha\beta}x_\beta$ and $y_\alpha=s_{\alpha\beta}y_\beta \ \mathrm{on}\ \overline{N_{\alpha\beta}}$. Define $[x \vert y] \in C(E^1)$ by
\begin{align*}
[x \vert y](e)=\overline{x_\alpha(e)}y_\alpha(e),\ \mathrm{if} \ e \in N_\alpha.
\end{align*}
By \cite[Definition~3.2]{HuiLiTwisted1}, define
\begin{align*}
C_c({E,\mathbf{N},\mathbf{S}}):=\Big\{x \in \prod_{\alpha \in \Lambda} C(\overline{N_\alpha}): x_\alpha=s_{\alpha\beta}x_\beta \ \mathrm{on}\ \overline{N_{\alpha\beta}}, [x \vert x] \in C_c(E^1) \Big\}.
\end{align*}
For $x, y \in C_c({E,\mathbf{N},\mathbf{S}}), \alpha \in \Lambda, f \in C_0(E^0)$, and for $v \in E^0$, define
\begin{align*}
(x\cdot f)_\alpha &:=x_\alpha (f \circ s \vert_{\overline{N_\alpha}}); \\
(f \cdot x)_\alpha &:=(f \circ r \vert_{\overline{N_\alpha}})x_\alpha; \quad \text{ and } \\
 \langle x,y \rangle_{C_0(E^0)}(v) &:=\sum_{s(e)=v}[x\vert y](e).
\end{align*}
By \cite[Theorem~3.3]{HuiLiTwisted1}, $C_c(E,\mathbf{N},\mathbf{S})$ is a right inner product $C_0(E^0)$-module with an adjointable left $C_0(E^0)$-action, and its completion $X(E,\mathbf{N},\mathbf{S})$ under the $\Vert\cdot\Vert_{C_0(E^0)}$-norm is a $C^*$-correspondence over $C_0(E^0)$. We denote $\mathcal{O}(E,\mathbf{N},\mathbf{S})$ the Cuntz-Pimsner algebra of $X(E,\mathbf{N},\mathbf{S})$ (see Notation~\ref{define Cunz-Pimsner alg}).

Let $E$ be a topological graph and let $p:L \to E^1$ be a Hermitian line bundle. Then each fibre has a one-dimensional Hilbert space structure conjugate linear in the first variable, and the map $\{(l_1,l_2) \in L \times L: p(l_1)=p(l_2) \} \to \mathbb{C}$ by sending $(l_1,l_2)$ to $\langle l_1,l_2 \rangle_{p(l_1)}$ is continuous. For two continuous sections $x,y$ of $L$, there is a continuous function $[x \vert y]:E^1 \to \mathbb{C}$ by $[x \vert y](e):=\langle x(e),y(e) \rangle_e$. Define $C_c(E,L)$ to be the set of all continuous sections $x$ satisfying that $[x\vert x] \in C_c(E^1)$. Then $C_c(E,L)$ has a natural vector space structure. For $x,y \in C_c(E,L), f \in C_0(E^0), e \in E^1$, and for $v \in E^0$, define
\[
(x \cdot f)(e):=x(e)f \circ s(e);\ (f \cdot x)(e):=f \circ r(e)x(e); \text{ and }
\]
\[
\langle x,y \rangle_{C_0(E^0)}(v):=\sum_{s(e)=v}[x \vert y](e).
\]
It is straightforward to check that $C_c(E,L)$ is a right inner product $C_0(E^0)$-module with an adjointable left $C_0(E^0)$-action, its completion $X(E,L)$ under the $\Vert\cdot\Vert_{C_0(E^0)}$-norm is a $C^*$-correspondence over $C_0(E^0)$. Denote by $\mathcal{O}(E,L)$ the Cuntz-Pimsner algebra of $X(E,L)$ (see Notation~\ref{define Cunz-Pimsner alg}).

Let $E$ be a topological graph and let $p:\mathbf{B} \to E^1$ be a principal circle bundle. By \cite[Proposition~4.65]{RaeburnWilliams:Moritaequivalenceand98}, there exists a collection of continuous local sections $\{s_\alpha:N_\alpha \to \mathbf{B}\}_{\alpha \in \Lambda}$ at each point of $E^1$. Denote the set of all equivariant functions in $C_c(\mathbf{B})$ by $C_c^{e}(\mathbf{B})$. For $x,y \in C_c^e(\mathbf{B})$, and for $e \in E^1$, define $[x \vert y](e):=\overline{x \circ s_\alpha(e)}y \circ s_\alpha(e)$ if $e \in N_\alpha$. Then $[x \vert y] \in C_c(E^1)$. By \cite[Page~258]{DeaconuKumjianEtAl:JOT01}, for $x,y \in C_c^e(\mathbf{B}), f \in C_0(E^0), b \in \mathbf{B}$, and for $v \in E^0$, define
\[
(x \cdot f)(b):=x(b)f(s(p(b)));\ (f \cdot x)(b):=f(r(p(b)))x(b); \text{ and }
\]
\[
\langle x,y \rangle_{C_0(E^0)}(v):=\sum_{s(e)=v}[x \vert y](e).
\]
Then $C_c^e(\mathbf{B})$ is a right inner product $C_0(E^0)$-module with an adjointable left $C_0(E^0)$ action, its completion $X(E,\mathbf{B})$ under the $\Vert\cdot\Vert_{C_0(E^0)}$-norm is a $C^*$-correspondence over $C_0(E^0)$. Denote by $\mathcal{O}(E,\mathbf{B})$ the Cuntz-Pimsner algebra of $X(E,\mathbf{B})$ (see Notation~\ref{define Cunz-Pimsner alg}).

\begin{prop}\label{X(E,N,S) is iso to X(E,L)}
Let $E$ be a topological graph, let $\mathbf{N}=\{N_\alpha\}_{\alpha \in \Lambda}$ be an open cover of $E^1$, and let $\mathbf{S}=\{s_{\alpha\beta} \in C(\overline{N_{\alpha\beta}},\mathbb{T})\}_{\alpha, \beta \in \Lambda}$ be a collection of circle-valued continuous functions such that for $\alpha, \beta, \gamma \in \Lambda, s_{\alpha\beta}s_{\beta\gamma}=s_{\alpha\gamma}$ on $\overline{N_{\alpha\beta\gamma}}$. Define a Hermitian line bundle over $E^1$ by (with the projection map $p$)
\[
L:=\amalg_{\alpha \in \Lambda}(N_\alpha \times \mathbb{C})/(e,z,\alpha) \sim (e,s_{\beta\alpha}(e)z,\beta).
\]
Then $X(E,\mathbf{N},\mathbf{S})$ and $X(E,L)$ are isomorphic as $C^*$-correspondences over $C_0(E^0)$.
\end{prop}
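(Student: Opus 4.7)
The plan is to define an explicit $C_0(E^0)$-bimodule isometry $\Phi$ between the dense inner-product submodules $C_c(E,\mathbf{N},\mathbf{S})$ and $C_c(E,L)$, show that it is already surjective at this level, and extend by continuity to the $C^*$-correspondence completions. Given $x = (x_\alpha) \in C_c(E,\mathbf{N},\mathbf{S})$, I set $\Phi(x)(e) := [(e, x_\alpha(e), \alpha)]$ for $e \in N_\alpha$. The 1-cocycle identity $s_{\alpha\beta}s_{\beta\gamma} = s_{\alpha\gamma}$ forces $s_{\alpha\alpha} = 1$ and $s_{\beta\alpha} = \overline{s_{\alpha\beta}}$; combined with the compatibility $x_\alpha = s_{\alpha\beta} x_\beta$ and the defining equivalence on $L$, this makes $\Phi(x)$ a well-defined section, whose continuity is immediate from continuity of each $x_\alpha$ and the quotient topology on $L$. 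A pointwise computation in any chart (using $|s_{\beta\alpha}| = 1$) yields $[\Phi(x) \vert \Phi(x)] = [x \vert x] \in C_c(E^1)$, so $\Phi(x) \in C_c(E,L)$.

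Next I would unpack the left and right $C_0(E^0)$-actions and the $C_0(E^0)$-valued inner products on both sides; direct comparison of the formulas shows $\Phi$ is $\CC$-linear, $C_0(E^0)$-bilinear, and inner-product preserving. In particular $\Phi$ is isometric in the module norm and extends uniquely to an isometric $C^*$-correspondence embedding $X(E,\mathbf{N},\mathbf{S}) \to X(E,L)$.

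For surjectivity, I would produce an explicit preimage of each $y \in C_c(E,L)$. Fix a partition of unity $\{\phi_\alpha\}$ subordinate to $\{N_\alpha\}$; since $\supp[y \vert y]$ is compact and $\{\phi_\alpha\}$ is locally finite, only finitely many $\phi_\alpha$ are nonzero on $\supp[y \vert y]$. In the local trivialization of $L$ over $N_\alpha$, write $(\phi_\alpha y)(e) = [(e, a_\alpha(e), \alpha)]$ for a unique continuous $a_\alpha: N_\alpha \to \CC$ whose support lies inside the compact set $\supp(\phi_\alpha) \subset N_\alpha$. Define $x^{(\alpha)}_\alpha := a_\alpha$ extended by zero to $\overline{N_\alpha}$, and for $\beta \neq \alpha$ set $x^{(\alpha)}_\beta := s_{\beta\alpha} a_\alpha$ on $\overline{N_{\alpha\beta}}$, again extended by zero to $\overline{N_\beta}$. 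Then $x^{(\alpha)} \in C_c(E,\mathbf{N},\mathbf{S})$ with $\Phi(x^{(\alpha)}) = \phi_\alpha y$, so $\Phi\bigl(\sum_\alpha x^{(\alpha)}\bigr) = y$.

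The main technical obstacle is verifying continuity of the zero-extensions defining $x^{(\alpha)}_\beta$ on $\overline{N_\beta}$: the formula $s_{\beta\alpha} a_\alpha$ is only defined on $\overline{N_{\alpha\beta}} \subset \overline{N_\beta}$. The gluing relies on compactness of $\supp(\phi_\alpha)$ inside the open set $N_\alpha$, together with the elementary point-set inclusion $N_\alpha \cap \overline{N_\beta} \subset \overline{N_\alpha \cap N_\beta}$. These together confine the support of $s_{\beta\alpha} a_\alpha$ to a compact subset of $\overline{N_{\alpha\beta}}$ that is separated from $\overline{N_\beta} \setminus \overline{N_{\alpha\beta}}$ by Hausdorffness, so the zero-extension is continuous on $\overline{N_\beta}$.
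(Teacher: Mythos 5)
Your proposal is correct and takes essentially the same route as the paper's proof: the same chart-wise map $\Phi(x)(e)=[(e,x_\alpha(e),\alpha)]$, the same partition-of-unity reduction to sections supported in a single chart, and the same explicit preimage $x^{(\alpha)}_\beta=s_{\beta\alpha}a_\alpha$ on $\overline{N_{\alpha\beta}}$ extended by zero. The continuity-of-the-zero-extension verification you supply via $N_\alpha\cap\overline{N_\beta}\subset\overline{N_\alpha\cap N_\beta}$ is precisely the detail the paper delegates to the construction in \cite{HuiLiTwisted1}, so no gap remains.
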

\begin{proof}
We define a map $\Phi:C_c(E,\mathbf{N},\mathbf{S}) \to X(E,L)$ by $\Phi(x)(e):=(e,x_\alpha(e)$, $\alpha)$, for all $x \in C_c(E,\mathbf{N},\mathbf{S})$ and for all $e \in N_\alpha$. It is straightforward to check that $\Phi$ preserves $C_0(E^0)$-valued inner products and module actions. So there exists a unique extension of $\Phi$ to $X(E,\mathbf{N},\mathbf{S})$ which preserves $C_0(E^0)$-valued inner products and module actions. We still denote the extension by $\Phi$. Fix $\alpha_0 \in \Lambda$, and fix $x \in C_c(E,L)$ such that $\supp([x \vert x]) \subset N_{\alpha_0}$. By a partition of unity argument, it is sufficient to show that $x$ is in the image of $\Phi$. Let $f$ be the composition of $x \vert_{N_{\alpha_0}}$ and the projection from $N_{\alpha_0}\times \mathbb{C} \times \{\alpha_0\}$ onto $\mathbb{C}$. Then $f \in C_0(N_{\alpha_0})$. As in \cite[Page~5]{HuiLiTwisted1}, there exists $(x_\alpha) \in C_c(E,\mathbf{N},\mathbf{S})$, such that
\begin{align*}
x_\alpha(e) := \begin{cases}
    s_{\alpha\alpha_0}(e)f(e) &\text{ if $e \in \overline{N_{\alpha\alpha_0}}$} \\
    0 &\text{ if $e \in \overline{N_\alpha} \setminus \overline{N_{\alpha\alpha_0}}$ }.
\end{cases}
\end{align*}
It is straightforward to check that $\Phi(x_\alpha)=x$ and we are done.
\end{proof}

\begin{prop}\label{X(E,N,S) is iso to L_B}
Let $E$ be a topological graph, let $\mathbf{N}=\{N_\alpha\}_{\alpha \in \Lambda}$ be an open cover of $E^1$, and let $\mathbf{S}=\{s_{\alpha\beta}\}_{\alpha,\beta \in \Lambda}$ be a $1$-cocycle relative to $\mathbf{N}$. Let $\mathbf{B}:=\amalg_{\alpha \in \Lambda}(N_\alpha \times \mathbb{T}) / (e,z,\alpha) \sim (e,z s_{\alpha\beta}(e),\beta)$ be the corresponding principal circle bundle. Then $X(E$, $\mathbf{N},\mathbf{S})$ and $X(E,\mathbf{B})$ are isomorphic as $C^*$-correspondences over $C_0(E^0)$.
\end{prop}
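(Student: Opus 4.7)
The plan is to mimic the strategy of Proposition~\ref{X(E,N,S) is iso to X(E,L)} and construct an explicit isomorphism $\Phi\colon C_c(E,\mathbf{N},\mathbf{S})\to C_c^e(\mathbf{B})$, then extend by density. Given $x=(x_\alpha)\in C_c(E,\mathbf{N},\mathbf{S})$, I would define $\Phi(x)\colon\mathbf{B}\to\mathbb{C}$ on representatives by
\[
\Phi(x)([e,z,\alpha]):=z\,x_\alpha(e).
\]
The gluing relation $(e,z,\alpha)\sim(e,zs_{\alpha\beta}(e),\beta)$ combined with the cocycle identity $x_\alpha=s_{\alpha\beta}x_\beta$ on $\overline{N_{\alpha\beta}}$ gives $z\,x_\alpha(e)=z s_{\alpha\beta}(e)x_\beta(e)$, so $\Phi(x)$ is well defined on $\mathbf{B}$. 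This matches exactly the well-definedness observation that does the work in the line-bundle case.

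Next I would verify that $\Phi(x)\in C_c^e(\mathbf{B})$. Continuity is automatic from the local trivializations, since $\Phi(x)$ restricted to the open set $p^{-1}(N_\alpha)$ is the continuous function $(e,z)\mapsto z\,x_\alpha(e)$. Equivariance follows from $\Phi(x)([e,z,\alpha]\cdot w)=zw\,x_\alpha(e)=w\,\Phi(x)([e,z,\alpha])$. For compact support, note that $\Phi(x)(b)=0$ iff $x_\alpha(p(b))=0$ iff $[x|x](p(b))=0$; hence $\supp\Phi(x)=p^{-1}(\supp[x|x])$, which is compact because $[x|x]\in C_c(E^1)$ and the circle-bundle projection $p$ is proper. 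Preservation of the inner product and left/right $C_0(E^0)$-actions is then a direct computation: using the local section $s_\alpha(e)=[e,1,\alpha]$ one has $\Phi(x)\circ s_\alpha(e)=x_\alpha(e)$, so the two local bracket formulas coincide, and the $C_0(E^0)$-actions defined via $f\circ s$ and $f\circ r$ are transported correctly. Since $\Phi$ preserves the $C_0(E^0)$-valued inner product, it is isometric for the correspondence norm and extends uniquely to an inner-product-preserving bimodule map $X(E,\mathbf{N},\mathbf{S})\to X(E,\mathbf{B})$.

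For surjectivity, I would follow the partition-of-unity step from Proposition~\ref{X(E,N,S) is iso to X(E,L)}: given $y\in C_c^e(\mathbf{B})$, a partition of unity on $p(\supp y)$ subordinate to $\{N_\alpha\}$ reduces the problem to the case $p(\supp y)\subset N_{\alpha_0}$ for some fixed $\alpha_0\in\Lambda$. For such $y$ I would define
\[
x_\beta(e):=\begin{cases} s_{\beta\alpha_0}(e)\,y(s_{\alpha_0}(e)) & \text{if } e\in\overline{N_{\beta\alpha_0}},\\ 0 & \text{if } e\in\overline{N_\beta}\setminus\overline{N_{\beta\alpha_0}}.\end{cases}
\]
The cocycle identity $s_{\beta\gamma}s_{\gamma\alpha_0}=s_{\beta\alpha_0}$ gives the compatibility $x_\beta=s_{\beta\gamma}x_\gamma$ on overlaps, and unwinding the equivariance of $y$ via $s_{\alpha_0}(e)=s_\beta(e)\cdot s_{\alpha_0\beta}(e)$ yields $\Phi((x_\beta))=y$.

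The main technical point is the bookkeeping around the equivariance convention on $C_c^e(\mathbf{B})$ and the direction of multiplication in the gluing relation defining $\mathbf{B}$; once the formula for $\Phi$ is chosen consistently with the conventions of \cite{DeaconuKumjianEtAl:JOT01}, no new analytic ingredient is required beyond a direct transcription of the line-bundle argument from Proposition~\ref{X(E,N,S) is iso to X(E,L)}.
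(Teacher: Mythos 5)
Your proposal is correct and takes essentially the same approach as the paper's proof: the identical map $\Phi(x)(e,z,\alpha)=z\,x_\alpha(e)$, the same isometric extension via preservation of the $C_0(E^0)$-valued inner product and module actions, and the same partition-of-unity reduction to $p(\supp y)\subset N_{\alpha_0}$ with the same explicit preimage $x_\beta=s_{\beta\alpha_0}\,(y\circ s_{\alpha_0})$ on $\overline{N_{\beta\alpha_0}}$, extended by zero. The only difference is that you write out the verifications (well-definedness on the quotient, equivariance, compact support via properness of $p$) that the paper declares straightforward.
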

\begin{proof}
We denote the projection by $p:\mathbf{B} \to E^1$. We define a map $\Phi:C_c(E,\mathbf{N},\mathbf{S}) \to X(E,\mathbf{B})$ by $\Phi(x)(e,z,\alpha):=zx_\alpha(e)$, for all $x \in C_c(E,\mathbf{N},\mathbf{S})$ and for all $(e,z,\alpha) \in \mathbf{B}$. It is straightforward to check that $\Phi$ preserves $C_0(E^0)$-valued inner products and module actions. So there exists a unique extension of $\Phi$ to $X(E,\mathbf{N},\mathbf{S})$ which preserves $C_0(E^0)$-valued inner products and module actions.
Let $\Phi$ also denote the extension.
Fix $\alpha_0 \in \Lambda$ and $x \in C_c^e(\mathbf{B})$ with $p(\supp(x)) \subset N_{\alpha_0}$. By a partition of unity argument, it is sufficient to show that $x$ is in the image of $\Phi$. Let $f$ be the composition of the continuous local section $s_{\alpha_0}:N_{\alpha_0} \to \mathbf{B}$ satisfying that $s_{\alpha_0}(e):=(e,1,\alpha_0)$ and $x$. Then $f \in C_c(N_{\alpha_0})$. By the construction in \cite[Page~5]{HuiLiTwisted1}, there exists $(y_\alpha) \in C_c(E,\mathbf{N},\mathbf{S})$, such that
\begin{align*}
y_\alpha(e) := \begin{cases}
    s_{\alpha\alpha_0}(e)f(e) &\text{ if $e \in \overline{N_{\alpha\alpha_0}}$} \\
    0 &\text{ if $e \in \overline{N_\alpha} \setminus \overline{N_{\alpha\alpha_0}}$ }.
\end{cases}
\end{align*}
It is straightforward to check that $\Phi(y_\alpha)=x$ and we are done.
\end{proof}

\begin{rmk}
In \cite{HuiLiTwisted1}, $X(E,\mathbf{N},\mathbf{S})$ is called the twisted graph correspondence and the Cuntz-Pimsner algebra $\mathcal{O}(E,\mathbf{N},\mathbf{S})$ is called the twisted topological graph algebra. By Propositions~\ref{X(E,N,S) is iso to X(E,L)}, \ref{X(E,N,S) is iso to L_B}, any form of $X(E,\mathbf{N},\mathbf{S}), X(E,L), X(E,\mathbf{B})$ can be used as the definition of the twisted graph correspondence, and any form of $\mathcal{O}(E,\mathbf{N},\mathbf{S}), \mathcal{O}(E,L), \mathcal{O}(E,\mathbf{B})$ can be used as the definition of the twisted topological graph algebra.

In this paper, we call $X(E,\mathbf{B})$ the \emph{twisted graph correspondence} associated to $E$ and $\mathbf{B}$, and we call $\mathcal{O}(E,\mathbf{B})$ the \emph{twisted topological graph algebra}.
\end{rmk}

\section{Twisted Topological Graph Algebras}

In this section, we recap the terminology of topological graphs from \cite{Katsura:TAMS04} and recall some fundamental results about twisted topological graph algebras from \cite{HuiLiTwisted1}.

Let $E$ be a topological graph. A subset $U$ of $E^1$ is called an \emph{$s$-section} if $s\vert_{U}:U \to s(U)$ is a homeomorphism with respect to the subspace topologies. Define $E_{\mathrm{fin}}^0$ to be the subset of all $v \in E^0$ which has an open neighborhood $N$ such that $r^{-1}(\overline{N})$ is compact; define $E_{\mathrm{sce}}^0:=E^0 \setminus \overline{r(E^1)}$; define $E_{\mathrm{rg}}^0:=E_{\mathrm{fin}}^0 \setminus \overline{E_{\mathrm{sce}}^0}$; and define $E_{\mathrm{sg}}^0:=E^0 \setminus E_{\mathrm{rg}}^0$.

The sets $E_{\mathrm{fin}}^0, E_{\mathrm{sce}}^0, E_{\mathrm{rg}}^0$ are all open, and the set $E_{\mathrm{sg}}^0$ is closed.

Denote by $r^0:=\id, s^0:=\id$, and define a topological graph $E_0:=(E^0,E^0$, $r^0,s^0)$. Denote by $r^1:=r, s^1:=s, E_1:=(E^0,E^1,r^1,s^1)=E$.

For $n \geq 2$, define
\[
E^n:=\Big\{\mu =(\mu_1,\dots,\mu_n)\in \prod_{i=1}^{n}E^1: s(\mu_i)=r(\mu_{i+1}), i=1,\dots,n-1 \Big\}
\]
endowed with the subspace topology of the product space $\prod_{i=1}^{n}E^1$. Define $r^n:E^n \to E^0$ by $r^n(\mu):=r(\mu_1)$, which is a continuous map. Define $s^n:E^n \to E^0$ by $s^n(\mu):=s(\mu_n)$, which is a local homeomorphism. Define a topological graph $E_n:=(E^0,E^n,r^n,s^n)$.

Define the \emph{finite-path space} $E^*:=\coprod_{n=0}^{\infty} E^n$ with the disjoint union topology. Define a continuous map $r:E^* \to E^0$ by $r(\mu):=r^n(\mu)$ if $\mu \in E^n$, define a local homeomorphism $s:E^n \to E^0$ by $s(\mu):=s^n(\mu)$ if $\mu \in E^n$, and define a topological graph $E_*:=(E^0,E^*,r,s)$.

Define the \emph{infinite path space}
\[
E^\infty:=\Big\{\mu\in \prod_{i=1}^{\infty}E^1: s(\mu_i)=r(\mu_{i+1}), i=1,2,\dots \Big\}.
\]
Define the range map $r:E^\infty \to E^0$ by $r(\mu):=r(\mu_1)$.

Denote the length of a path $\mu \in E^* \amalg E^\infty$ by $\vert\mu\vert$.

In discussing Cuntz-Pimsner algebras associated with correspondences we follow the conventions of
\cite{Katsura:JFA04} and \cite[Chapter~8]{Raeburn:Graphalgebras05}.

\begin{ntn}\label{define Cunz-Pimsner alg} 
Let $E$ be a topological graph and let $p:\mathbf{B} \to E^1$ be a principal circle bundle. Let $\phi:C_0(E^0) \to \mathcal{L}(X(E,\mathbf{B}))$ denote the homomorphism determined by the left action. Define $J_{X(E,\mathbf{B})}:=\{f \in C_0(E^0):f\in\phi^{-1}(\mathcal{K}(X(E,\mathbf{B})))\cap(\ker\phi)^\perp\}$, which is a closed two-sided ideal of $C_0(E^0)$. A pair $(\psi, \pi)$ consisting of a linear map $\psi:X(E,\mathbf{B}) \to B$ and a homomorphism $\pi:C_0(E^0) \to B$ defines a (Toeplitz) \emph{representation} of $X(E,\mathbf{B})$ into a $C^*$-algebra $B$ if
\[
\psi(f \cdot x)=\pi(f)\psi(x) \quad\text{and} \quad \psi(x)^*\psi(y)=\pi(\langle x,y \rangle_{C_0(E^0)})
\]
for all $x,y \in X(E,\mathbf{B}),f\in C_0(E^0)$.
In this case there exists a unique homomorphism $\psi^{(1)}:\mathcal{K}(X(E,\mathbf{B})) \to B$ such that $\psi^{(1)}(\Theta_{x,y})=\psi(x)\psi(y)^*$.
We say that $(\psi,\pi)$ is  \emph{covariant} if $\pi(f)=\psi^{(1)}(\phi(f))$ for all $f \in J_{X(E,\mathbf{B})}$.
The representation $(\psi,\pi)$ is said to be \emph{universal covariant} if for any covariant  representation $(\psi',\pi')$ of $X(E,\mathbf{B})$ into a $C^*$-algebra $C$, there exists a unique homomorphism $h:B \to C$ such that $h\circ\psi=\psi',h\circ\pi=\pi'$. The $C^*$-algebra generated by the images of a universal covariant  representation of $X(E,\mathbf{B})$ is called the \emph{Cuntz-Pimsner algebra} associated to $E,\mathbf{B}$;
it is denoted by $\mathcal{O}(E,\mathbf{B})$.
\end{ntn}

\begin{prop}[{\cite[Proposition~3.10]{HuiLiTwisted1}}]
Let $E$ be a topological graph and let $p:\mathbf{B} \to E^1$ be a principal circle bundle. Fix a nonnegative $f \in C_c(E_{\mathrm{rg}}^0)$, a finite cover $\{N_i\}_{i=1}^{n}$ of $r^{-1}(\supp(f))$ by precompact open $s$-sections with local sections $\{\varphi_i:N_i \to \mathbf{B}\}_{i=1}^{n}$, and a finite collection $\{h_i\}_{i=1}^{n} \subset C_c(E^1, [0,1])$ satisfying $\supp(h_i) \subset N_i$ and $\sum_{i=1}^{n}h_i=1$ on $r^{-1}(\supp(f))$. For $i$, for $b \in p^{-1}(N_i)$, define $x_i \in C_c^e(p^{-1}(N_i))$ by $x_i(b):=b / (\varphi_i \circ p(b)) \sqrt{h_i \circ p(b) f \circ r \circ p(b)}$. Then
\[
\phi(f)=\sum_{i=1}^{n}\Theta_{x_i,x_i}.
\]
\end{prop}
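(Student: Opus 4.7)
The plan is to verify the identity pointwise: I will show that for every $y \in C_c^e(\mathbf{B})$ and every $b \in \mathbf{B}$, we have $\bigl(\sum_{i=1}^n \Theta_{x_i,x_i}(y)\bigr)(b) = f(r(p(b)))\, y(b) = (\phi(f) y)(b)$. Since $C_c^e(\mathbf{B})$ is dense in $X(E,\mathbf{B})$ and both sides are bounded operators, this suffices.

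First I would unwind $\Theta_{x_i,x_i}(y) = x_i \cdot \langle x_i, y\rangle_{C_0(E^0)}$. Using the given formula $x_i(b) = (b/\varphi_i(p(b)))\sqrt{h_i(p(b)) f(r(p(b)))}$, note that $\varphi_i$ is a local section so $x_i(\varphi_i(e)) = \sqrt{h_i(e) f(r(e))}$, which is real. Hence, for $e \in N_i$,
\[
[x_i \mid y](e) = \overline{x_i(\varphi_i(e))}\, y(\varphi_i(e)) = \sqrt{h_i(e) f(r(e))}\, y(\varphi_i(e)),
\]
and $[x_i\mid y]$ vanishes off $N_i$. Because $N_i$ is an $s$-section, the sum defining $\langle x_i, y\rangle_{C_0(E^0)}(v) = \sum_{s(e)=v}[x_i\mid y](e)$ has at most one nonzero term: if $v$ lies in $s(N_i)$, there is a unique preimage $e \in N_i$ under $s$; otherwise the sum is zero.

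Next I would compute $(x_i \cdot \langle x_i, y\rangle_{C_0(E^0)})(b) = x_i(b)\, \langle x_i, y\rangle_{C_0(E^0)}(s(p(b)))$. The factor $x_i(b)$ forces $p(b) \in N_i$, and then the $s$-section property identifies the unique edge in $N_i$ above $s(p(b))$ as $p(b)$ itself. Substituting,
\[
(x_i \cdot \langle x_i, y\rangle_{C_0(E^0)})(b) = (b/\varphi_i(p(b)))\, h_i(p(b))\, f(r(p(b)))\, y(\varphi_i(p(b))).
\]
Now I invoke the $\mathbb{T}$-equivariance of $y$: writing $b = \varphi_i(p(b)) \cdot (b/\varphi_i(p(b)))$ in the principal bundle and applying the equivariance law gives $y(\varphi_i(p(b))) = (b/\varphi_i(p(b)))^{-1} y(b)$, so the two factors involving $b/\varphi_i(p(b))$ cancel and we obtain $(x_i \cdot \langle x_i, y\rangle_{C_0(E^0)})(b) = h_i(p(b))\, f(r(p(b)))\, y(b)$.

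Finally I would sum over $i$: if $p(b) \in r^{-1}(\supp(f))$ then $\sum_i h_i(p(b)) = 1$ by the partition-of-unity hypothesis, and if not then $f(r(p(b))) = 0$; in either case the sum telescopes to $f(r(p(b))) y(b) = (\phi(f) y)(b)$, as required. The main bookkeeping obstacle is simply keeping track of the $\mathbb{T}$-action conventions and verifying that the fiberwise division $b/\varphi_i(p(b))$ interacts correctly with the equivariance of $y$; once that cancellation is in place, the remainder of the argument is routine.
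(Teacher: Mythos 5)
Your proof is correct. Note that the paper itself states this proposition without proof, citing \cite[Proposition~3.10]{HuiLiTwisted1}, so there is no in-paper argument to compare against; your direct fibrewise verification is exactly the natural (and, in the cited reference, essentially the actual) route: unwind $\Theta_{x_i,x_i}(y)=x_i\cdot\langle x_i,y\rangle_{C_0(E^0)}$, use the $s$-section property of $N_i$ to collapse the inner-product sum to the single edge $p(b)$, cancel the phase $b/\varphi_i(p(b))$ against the equivariance of $y$, and invoke the partition of unity together with density of $C_c^e(\mathbf{B})$ in $X(E,\mathbf{B})$. All the key points are in place: $x_i(\varphi_i(e))=\sqrt{h_i(e)f(r(e))}$ is real, the equivariance convention $y(z\cdot b)=zy(b)$ matches the one the paper uses (compare the computation in the proof of Theorem~\ref{The twisted top graph alg is iso to the twisted gpoid C*-alg}, where $x(\lambda)=\tau(\varphi(p(\lambda)),\lambda)\sqrt{f(p(\lambda))}$ obeys the same rule), and the case split $p(b)\in r^{-1}(\supp(f))$ versus $f(r(p(b)))=0$ correctly handles points where the $h_i$ do not sum to $1$. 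One trivial slip: you wrote $b=\varphi_i(p(b))\cdot(b/\varphi_i(p(b)))$, whereas with the paper's convention the circle acts on the left, $b=(b/\varphi_i(p(b)))\cdot\varphi_i(p(b))$; this does not affect the cancellation you derive from it.
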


Finally, we recall some operations on a principal circle bundle from \cite{DeaconuKumjianEtAl:JOT01}. Let $T, T_1, T_2$ be locally compact Hausdorff spaces and let $p:\mathbf{B} \to T,p_i:\mathbf{B}_i \to T_i, i=1,2$ be principal circle bundles. For $b, b'$ in the same fibre of $\mathbf{B}$, there exists a unique $b/b' \in \mathbb{T}$ such that $b=(b/b') \cdot b'$. There exists a conjugate principal circle bundle $\overline{\mathbf{B}}$ over $T$ together with a homeomorphism $\mathbf{B} \to \overline{\mathbf{B}}$ by sending $b$ to $\overline{b}$, such that $z \cdot \overline{b}=\overline{\overline{z} \cdot b}$ for all $z \in \mathbb{T}, b \in \mathbf{B}$. Define a principal circle bundle over $T_1 \times T_2$ by
\[
\mathbf{B}_1 \star \mathbf{B}_2:=(\mathbf{B}_1 \times \mathbf{B}_2)/\{(z \cdot b,b') \sim (b,z \cdot b'):b \in \mathbf{B}_1,b' \in \mathbf{B}_2,z \in \mathbb{T}\}.
\]
Inductively, for $n \geq 1$, we obtain a principal circle bundle $\mathbf{B}^{\star n}$ over $\prod_{i=1}^{n}T$. Notice that the restriction bundle of $\mathbf{B} \star \overline{\mathbf{B}}$ to $T$ is isomorphic to the product bundle $\mathbb{T} \times T$ by sending $(b,\overline{b'})$ to $(b/b',p(b))$.

\section{Boundary Paths}

Yeend in \cite{Yeend:JOT07, Yeend:CM06} gave a notion of boundary paths for topological $k$-graphs which include topological graphs. Webster in \cite{MR3119197} provided an alternative approach to define boundary paths of a directed graph. In this section we give a definition of boundary paths of a topological graph which is a generalization of Webster's definition, and we will prove that our definition of boundary paths of a topological graph coincides with Yeend's.

\begin{defn}\label{def of boundary path of top graph}
Let $E$ be a topological graph. Define the set of \emph{boundary paths} to be
\[
\partial E:=E^\infty \amalg \{\mu\in E^*:s(\mu) \in E_{\mathrm{sg}}^0 \}.
\]
\end{defn}

\begin{defn}[{\cite[Definitions~4.1, 4.2]{Yeend:JOT07}, \cite[Page~236]{Yeend:CM06}}]\label{define boundary paths of Yeend}
Let $E$ be a topological graph and let $V \subset E^0$. A set $U \subset r^{-1}(V) (\subset E^*)$ is said to be \emph{exhaustive} for $V$ if for any $\lambda \in r^{-1}(V)$ there exists $\alpha \in U$ such that $\lambda=\alpha\beta$ or $\alpha=\lambda\beta$.

An infinite path $\mu \in E^\infty$ is called a \emph{boundary path} in the sense of Yeend if for any $m \geq 0$, for any compact set $K \subset E^*$ such that $r(K)$ is a neighborhood of $r(\mu_{m+1})$ and $K$ is exhaustive for $r(K)$, there exists at least one path in the set $\{r(\mu_{m+1}), \mu_{m+1},\mu_{m+1}\mu_{m+2},\dots\}$ lying in $K$.

A finite path $\mu \in E^*$ is called a \emph{boundary path} in the sense of Yeend if for any $0 \leq m \leq \vert\mu\vert$, for any compact set $K \subset E^*$ such that $r(K)$ is a neighborhood of $r(\mu_{m+1})$ and $K$ is exhaustive for $r(K)$ if $m < \vert\mu\vert$, or that $r(K)$ is a neighborhood of $s(\mu)$ and $K$ is exhaustive for $r(K)$ if $m=\vert\mu\vert$, there exists at least one path in the set $\{r(\mu_{m+1}), \mu_{m+1},\dots,\mu_{m+1}\cdots\mu_{\vert\mu\vert}\}$ lying in $K$ if $0 \leq m < \vert \mu\vert$ or $s(\mu) \in K$ if $m=\vert\mu\vert$.

Denote by $\partial_Y E$ the set of all boundary paths in the sense of Yeend.
\end{defn}

\begin{rmk}\label{def of boundary paths in the sense of Yeend}
We explain Definition~\ref{define boundary paths of Yeend} in a more elementary way. Let $E$ be a topological graph and let $\mu \in E^* \amalg E^\infty$.

Let $\mu \in E^\infty$. Then $\mu \in \partial_Y E$ if and only if for $m \geq 0$, and for a compact subset $K \subset E^*$ satisfying both of the following conditions
\begin{enumerate}
\item\label{r(mu_{m+1}) in r(K)} $r(K)$ is a neighborhood of $r(\mu_{m+1})$,
\item\label{exhaustive condition} for $\lambda \in E^*$ with $r(\lambda) \in r(K)$ there exists $\alpha \in K$ such that $\lambda=\alpha\beta$ or $\alpha=\lambda\beta$,
\end{enumerate}
there exists at least one path in the set $\{r(\mu_{m+1}), \mu_{m+1},\mu_{m+1}\mu_{m+2},\dots\}$ lying in $K$.

Let $\mu \in E^*$. Then $\mu \in \partial_Y E$ if and only if for $0 \leq m \leq \vert \mu\vert$, for a compact subset $K \subset E^*$ satisfying both of the following conditions
\begin{enumerate}\setcounter{enumi}{2}
\item\label{r(mu_{m+1}) in r(K) finite path} $r(K)$ is a neighborhood of $r(\mu_{m+1})$ if $m < \vert \mu\vert$, or is a neighborhood of $s(\mu)$ if $m=\vert\mu\vert$,
\item\label{exhaustive condition finite path} for $\lambda \in E^*$ with $r(\lambda) \in r(K)$ there exists $\alpha \in K$ such that $\lambda=\alpha\beta$ or $\alpha=\lambda\beta$,
\end{enumerate}
there exists at least one path in the set $\{r(\mu_{m+1}), \mu_{m+1},\dots,\mu_{m+1}\cdots\mu_{\vert\mu\vert}\}$ lying in $K$ if $0 \leq m < \vert \mu\vert$, and $s(\mu) \in K$ if $m=\vert\mu\vert$.
\end{rmk}

\begin{lemma}\label{infinite paths are boundary paths}
Let $E$ be a topological graph. Fix $\mu \in E^\infty$. Then $\mu \in \partial_Y E$.
\end{lemma}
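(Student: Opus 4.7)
The plan is to use the disjoint union topology on $E^* = \coprod_{n \geq 0} E^n$ to bound the length of paths in $K$, and then to apply the exhaustive condition to a single finite initial segment of $\mu$ that is long enough.

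First I would fix $\mu \in E^\infty$, $m \geq 0$, and a compact set $K \subset E^*$ satisfying conditions \eqref{r(mu_{m+1}) in r(K)} and \eqref{exhaustive condition} of Remark~\ref{def of boundary paths in the sense of Yeend}. Since $E^*$ carries the disjoint union topology, each $E^n$ is clopen in $E^*$ and the open cover $\{E^n\}_{n \geq 0}$ of $K$ admits a finite subcover; hence there exists $N \geq 0$ with $K \subset \bigcup_{n=0}^{N} E^n$, so every element of $K$ has length at most $N$.

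Next, consider the finite path $\lambda := \mu_{m+1}\mu_{m+2}\cdots\mu_{m+N+1} \in E^{N+1}$, noting that $r(\lambda) = r(\mu_{m+1}) \in r(K)$. By the exhaustive condition \eqref{exhaustive condition} applied to $\lambda$, there is some $\alpha \in K$ with either $\lambda = \alpha\beta$ or $\alpha = \lambda\beta$ for an appropriate finite path $\beta$. The second alternative would force $|\alpha| \geq |\lambda| = N+1$, which contradicts $|\alpha| \leq N$; hence $\alpha$ must be an initial segment of $\lambda$. Thus $\alpha$ equals either $r(\mu_{m+1})$ or $\mu_{m+1}\cdots\mu_{m+j}$ for some $1 \leq j \leq N+1$, so $\alpha$ lies in the set $\{r(\mu_{m+1}), \mu_{m+1}, \mu_{m+1}\mu_{m+2}, \ldots\}$, which is what was required.

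There is no real obstacle here; the only point to watch is that compactness of $K \subset E^*$ in the disjoint union topology forces a uniform bound on the lengths of paths in $K$, after which the exhaustive condition cannot possibly be witnessed by a proper extension of a sufficiently long initial segment of $\mu$, so it must be witnessed by an initial segment itself.
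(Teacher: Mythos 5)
Your proof is correct and uses the same two ingredients as the paper's: compactness of $K$ in the disjoint union topology on $E^*$ forces a uniform bound $N$ on the lengths of paths in $K$, and the exhaustive condition applied to initial segments of $\mu$. The only difference is presentational: you apply exhaustivity once, directly, to the initial segment $\mu_{m+1}\cdots\mu_{m+N+1}$ of length $N+1$ and rule out the extension alternative $\alpha=\lambda\beta$ via the length bound, whereas the paper argues by contradiction, applying exhaustivity to every initial segment $\mu_{m+1}\cdots\mu_{m+n}$ to produce paths $\alpha^n \in K$ of unbounded length.
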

\begin{proof}
Fix $m \geq 0$, and fix a compact subset $K \subset E^*$ satisfying Conditions~(\ref{r(mu_{m+1}) in r(K)}), (\ref{exhaustive condition}) of Remark~\ref{def of boundary paths in the sense of Yeend}. Suppose that $r(\mu_{m+1}), \mu_{m+1},\mu_{m+1}\mu_{m+2},\dots \notin K$, for a contradiction. By Condition~(\ref{r(mu_{m+1}) in r(K)}) of Remark~\ref{def of boundary paths in the sense of Yeend}, $r(\mu_{m+1}) \in r(K)$. For $n \geq 1$, we have $r(\mu_{m+1}\cdots\mu_{m+n})=r(\mu_{m+1}) \in r(K)$. By Condition~(\ref{exhaustive condition}) of Remark~\ref{def of boundary paths in the sense of Yeend} and by the assumption, there exists $\beta^n \in E^* \setminus E^0$ such that $r(\beta^n)=s(\mu_{m+n})$ and $\alpha^n:=\mu_{m+1}\cdots\mu_{m+n}\beta^n \in K$. Thus we obtain a sequence of finite paths $(\alpha^n)_{n=1}^{\infty}$ contained in $K$ whose lengths are not bounded. However, the length of paths in $K$ is bounded since $K$ is compact in $E^*$. So we get a contradiction. Hence there exists at least one path in the set $\{r(\mu_{m+1}), \mu_{m+1},\mu_{m+1}\mu_{m+2},\dots\}$ lying in $K$. Therefore $\mu \in \partial_Y E$.
\end{proof}

\begin{lemma}\label{finite paths boundary paths}
Let $E$ be a topological graph. Fix $\mu \in E^*$. Then $\mu \in \partial E$ if and only if $\mu \in \partial_Y E$.
\end{lemma}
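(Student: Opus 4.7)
The plan is to establish both directions. Since $\mu \in E^*$, the condition $\mu \in \partial E$ is exactly $s(\mu) \in E_{\mathrm{sg}}^0$, and Yeend's condition at index $m = |\mu|$ will carry most of the weight.

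For $\mu \in \partial_Y E \Rightarrow \mu \in \partial E$ I argue the contrapositive. Assuming $s(\mu) \in E_{\mathrm{rg}}^0 = E_{\mathrm{fin}}^0 \setminus \overline{E_{\mathrm{sce}}^0}$, pick an open neighborhood $N$ of $s(\mu)$ with $\overline{N} \cap \overline{E_{\mathrm{sce}}^0} = \emptyset$ and $r^{-1}(\overline{N})$ compact, and set $K := r^{-1}(\overline{N}) \subset E^1$. Then $K$ is compact, $s(\mu) \notin K$ (since $K \subset E^1$), and exhaustiveness follows from $\lambda \mapsto \lambda_1 \in K$ when $|\lambda| \geq 1$ (noting $r(K) \subset \overline{N}$) and from picking an incident edge otherwise. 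The one nontrivial verification is that $r(K)$ is a neighborhood of $s(\mu)$: the inclusion $N \subset E^0 \setminus E_{\mathrm{sce}}^0 = \overline{r(E^1)}$, combined with compactness of $r^{-1}(\overline{N})$ and continuity of $r$, forces $N \subset r(E^1)$ by a sequential-compactness argument, so $N \subset r(K)$. Thus $K$ witnesses the failure of Yeend's condition at $m = |\mu|$, so $\mu \notin \partial_Y E$.

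For the reverse direction, assume $s(\mu) \in E_{\mathrm{sg}}^0$ and split on the index. When $m = |\mu|$, I would assume a compact $K$ with $r(K)$ a neighborhood of $s(\mu)$ and $K$ exhaustive but $s(\mu) \notin K$, and derive a contradiction from each way $s(\mu)$ can be singular. If $s(\mu) \in \overline{E_{\mathrm{sce}}^0}$, pick $v_n \to s(\mu)$ with $v_n \in E_{\mathrm{sce}}^0$; exhaustiveness at $\lambda = v_n$ combined with the absence of edges near $v_n$ forces $v_n \in K$, and closedness of $K$ then gives $s(\mu) \in K$, a contradiction. If $s(\mu) \notin E_{\mathrm{fin}}^0$, form the compact set $K^{(1)} := \{\alpha_1 : \alpha \in K,\, |\alpha| \geq 1\}$; an open neighborhood $U$ of $s(\mu)$ inside $r(K) \setminus (K \cap E^0)$ satisfies $r^{-1}(U) \subset K^{(1)}$ by exhaustiveness, and any $N$ with $\overline{N} \subset U$ then has $r^{-1}(\overline{N})$ compact (closed in $E^1$, contained in $K^{(1)}$), contradicting the failure of finiteness.

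For $m < |\mu|$ I reduce to the previous case. Set $\mu' := \mu_{m+1}\cdots\mu_{|\mu|}$ and assume for contradiction that no element of $\{r(\mu_{m+1}), \mu_{m+1}, \ldots, \mu'\}$ lies in $K$. Since $s^{|\mu|-m} \colon E^{|\mu|-m} \to E^0$ is a local homeomorphism, choose an open neighborhood $\mathcal V$ of $\mu'$ on which $s^{|\mu|-m}$ is a homeomorphism onto $W := s(\mathcal V)$, with inverse $v \mapsto \mu''_v$. Shrink $\mathcal V$ to a $\mathcal V_0$ with $\overline{\mathcal V_0} \subset \mathcal V$ compact, small enough that $r(\overline{\mathcal V_0}) \subset r(K)$ and, using compactness of $K \cap E^k$ for each $k < |\mu|-m$ together with the no-prefix assumption, small enough that no $\alpha \in K$ of length $< |\mu|-m$ is a prefix of $\mu''_v$ for any $v \in \overline{W_0} := s(\overline{\mathcal V_0})$ — this last constraint is enforced by a pigeonhole/limit argument ruling out sequences $v_n \to s(\mu)$ with proper-prefix obstructions, since any limiting $\alpha_\infty \in K$ would be a prefix of $\mu'$. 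Define the translated compact set
\[
K^\sharp := \{\nu \in E^* : r(\nu) \in \overline{W_0},\ \mu''_{r(\nu)}\nu \in K\}.
\]
Routine checks using the local section $v \mapsto \mu''_v$ show $K^\sharp$ is compact, $r(K^\sharp)$ contains a neighborhood of $s(\mu)$, and $K^\sharp$ is exhaustive. Applying the $m = |\mu|$ case to the length-zero path at $s(\mu)$ and the compact set $K^\sharp$ forces $s(\mu) \in K^\sharp$, which unpacks to $\mu' \in K$, contradicting the assumption. The main obstacle is this $m < |\mu|$ case: engineering $\mathcal V_0$ so that all three properties of $K^\sharp$ hold simultaneously requires several successive shrinkings, and the uniform ruling out of proper-prefix obstructions over $v \in \overline{W_0}$ is the most delicate ingredient.
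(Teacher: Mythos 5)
Your proof is correct, but it takes a genuinely different route from the paper's in the forward direction, so a comparison is worthwhile. For $\partial_Y E \Rightarrow \partial E$ the two arguments are essentially the same: the paper simply invokes \cite[Proposition~2.8]{Katsura:TAMS04} to produce a neighborhood $N$ of $s(\mu) \in E_{\mathrm{rg}}^0$ with $K := r^{-1}(N)$ compact and $r(K) = N$, whereas you reprove the needed fact from scratch (your sequential-compactness argument that $N \subset \overline{r(E^1)}$ together with compactness of $r^{-1}(\overline{N})$ forces $N \subset r(E^1)$ is valid, but citing Katsura would shorten it). For $\partial E \Rightarrow \partial_Y E$ the case structures are transposed: the paper argues directly at each $m < \vert\mu\vert$ --- choosing an $s^{\vert\mu\vert-m}$-section $N$ around the tail $\mu_{m+1}\cdots\mu_{\vert\mu\vert}$ all of whose members have prefixes avoiding $K$, then splitting on $s(\mu) \notin E_{\mathrm{fin}}^0$ (a net/subnet argument pushing extended paths $\lambda^a e_a \beta^a$ into the compact set $K$ to show $r^{-1}(\overline{U})$ is compact) versus $s(\mu) \in \overline{E_{\mathrm{sce}}^0}$ (a single vertex $v \in s(N) \setminus \overline{r(E^1)}$ whose section path admits no extension) --- and then merely asserts that the $m = \vert\mu\vert$ case follows by ``similar arguments''. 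You instead prove the $m = \vert\mu\vert$ case in full and reduce $m < \vert\mu\vert$ to it via the translated compact exhaustive set $K^\sharp$; this buys two things: the vertex case, which the paper leaves implicit, is the one you actually establish, and your containment $r^{-1}(U) \subset K^{(1)}$ into the first-edge set is a more elementary route to compactness of $r^{-1}(\overline{N})$ than the paper's net argument. Note also that the step you flag as most delicate --- the uniform exclusion, over $v \in \overline{W_0}$, of proper prefixes lying in $K$, proved by passing to a constant-length subsequence and using closedness of $K$ --- is exactly the justification for the second bullet property of $N$ that the paper states without proof, so your write-up fills a gap the paper glosses over. The price of your route is the extra verification that $K^\sharp$ is compact, exhaustive for $r(K^\sharp)$, and that $r(K^\sharp) \supset W_0$; these do all check out (exhaustiveness of $K$ applied to $\mu''_v\lambda$ combined with the prefix exclusion yields comparability within $K^\sharp$, and $K^\sharp$ is the continuous shift image of a closed subset of $K$), and the final application of the $m = \vert\mu\vert$ case to $K^\sharp$ correctly unpacks to $\mu_{m+1}\cdots\mu_{\vert\mu\vert} \in K$, giving the desired contradiction.
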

\begin{proof}
First of all, suppose that $\mu \in \partial E$. Then $s(\mu) \in E_{\mathrm{sg}}^0$. We split into two cases.

Fix $0 \leq m < \vert \mu\vert$, and fix a compact subset $K \subset E^*$ satisfying Conditions~(\ref{r(mu_{m+1}) in r(K) finite path}), (\ref{exhaustive condition finite path}) of Remark~\ref{def of boundary paths in the sense of Yeend}. Suppose that $r(\mu_{m+1}), \mu_{m+1},\dots,\mu_{m+1}\cdots\mu_{\vert\mu\vert} \notin K$, for a contradiction. There exist an open $s^{\vert\mu\vert-m}$-section $N$ of $\mu_{m+1}\cdots\mu_{\vert\mu\vert}$ and an open neighborhood $U$ of $s(\mu)$ such that

$\bullet$ $r^{\vert\mu\vert-m}(N) \subset r(K)$;

$\bullet$ for $\lambda \in N$, we have $r(\lambda), \lambda_{m+1},\dots,\lambda_{m+1}\cdots\lambda_{\vert\mu\vert} \notin K$; and

$\bullet$ $\overline{U} \subset s(N)$.

Case 1: $s(\mu) \notin E_{\mathrm{fin}}^0$. By Condition~(\ref{exhaustive condition finite path}) of Remark~\ref{def of boundary paths in the sense of Yeend} for any net $(e_a)_{a \in A} \subset r^{-1}(\overline{U})$, there exist a net $(\lambda^a)_{a \in A} \subset N$ and a net $(\beta^a)_{a \in A} \subset E^*$, such that $\lambda^a e_a \beta^a$ is a path for $a \in A$, and $(\lambda^a e_a \beta^a)_{a \in A} \in K$. So there exists a convergent subnet of the net $(e_a)_{a \in A}$ because $K$ is compact. Since $(e_a)_{a \in A}$ is arbitrary, $r^{-1}(\overline{U})$ is compact. On the other hand, since $s(\mu) \notin E_{\mathrm{fin}}^0, r^{-1}(\overline{U})$ is then not compact. Hence we deduce a contradiction. Therefore there exists at least one path in the set $\{r(\mu_{m+1}), \mu_{m+1},\dots,\mu_{m+1}\cdots\mu_{\vert\mu\vert}\}$ lying in $K$.

Case 2: $s(\mu) \in \overline{E^0 \setminus \overline{r(E^1)}}$. Since $s(N)$ is an open neighborhood of $s(\mu)$, there exists $v \in s(N) \setminus \overline{r(E^1)}$. Then there exists $\lambda \in N$ such that $s(\lambda)=v$. So $r(\lambda), \lambda_{m+1}, \dots$, and $\lambda_{m+1}\dots\lambda_{\vert\mu\vert} \notin K$. However, since $v \notin r(E^1)$, there exists at least one path in the set $\{r(\mu_{m+1}), \mu_{m+1},\dots,\mu_{m+1}\cdots\mu_{\vert\mu\vert}\}$ lying in $K$, which is a contradiction. Hence there exists at least one path in the set $\{r(\mu_{m+1}), \mu_{m+1},\dots$, $\mu_{m+1}\cdots\mu_{\vert\mu\vert}\}$ lying in $K$.

Now fix $m = \vert \mu\vert$, and fix a compact subset $K \subset E^*$ satisfying Conditions~(\ref{r(mu_{m+1}) in r(K) finite path}), (\ref{exhaustive condition finite path}) of Remark~\ref{def of boundary paths in the sense of Yeend}. Similar arguments as above yield that $s(\mu) \in K$. So $\mu \in \partial_Y E$.

Conversely, suppose that $\mu \in \partial_Y E$. Suppose that $s(\mu) \in E_{\mathrm{rg}}^0$, for a contradiction. By \cite[Proposition~2.8]{Katsura:TAMS04}, there exists a neighborhood $N$ of $s(\mu)$ such that $r^{-1}(N)$ is compact and $r(r^{-1}(N))=N$. Let $m=\vert\mu\vert$ and let $K=r^{-1}(N)$. It is straightforward to check that $K$ satisfies Conditions~(\ref{r(mu_{m+1}) in r(K) finite path}), (\ref{exhaustive condition finite path}) of Remark~\ref{def of boundary paths in the sense of Yeend}. By the assumption, we get $s(\mu) \in K$, but this is impossible because $K \subset E^1$. So we deduce a contradiction. Hence $s(\mu) \in E_{\mathrm{sg}}^0$ and $\mu \in \partial E$.
\end{proof}

\begin{prop}\label{def of boundary path coincide with Yeend's}
Let $E$ be a topological graph. Then $\partial E=\partial_Y E$.
\end{prop}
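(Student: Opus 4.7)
The plan is to deduce the proposition as an immediate corollary of the two preceding lemmas. Observe that both $\partial E$ and $\partial_Y E$ decompose naturally according to whether a path is infinite or finite, so the equality $\partial E = \partial_Y E$ splits into two independent containments, one for each case.

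First, I would handle the infinite-path portion. By Definition~\ref{def of boundary path of top graph} every element of $E^\infty$ lies in $\partial E$, and by Lemma~\ref{infinite paths are boundary paths} every element of $E^\infty$ also lies in $\partial_Y E$. Conversely, any infinite path belonging to either set is trivially an element of $E^\infty$. Hence the infinite-path components of $\partial E$ and $\partial_Y E$ both equal $E^\infty$.

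Next, I would handle the finite-path portion. By Definition~\ref{def of boundary path of top graph} the finite-path component of $\partial E$ is $\{\mu \in E^* : s(\mu) \in E^0_{\mathrm{sg}}\}$, and Lemma~\ref{finite paths boundary paths} asserts that for $\mu \in E^*$ we have $\mu \in \partial E$ if and only if $\mu \in \partial_Y E$. Therefore the finite-path components of the two sets also coincide.

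Taking the union of the infinite- and finite-path cases gives $\partial E = \partial_Y E$. There is no real obstacle here since all technical content has been absorbed into Lemmas~\ref{infinite paths are boundary paths} and \ref{finite paths boundary paths}; the proposition is essentially a one-line combination of these two results.
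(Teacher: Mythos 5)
Your proof is correct and matches the paper's own argument, which likewise deduces the proposition immediately from Lemmas~\ref{infinite paths are boundary paths} and \ref{finite paths boundary paths}. Your explicit decomposition into infinite-path and finite-path components simply spells out the one-line combination the paper leaves implicit.
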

\begin{proof}
It follows immediately from Lemmas~\ref{infinite paths are boundary paths}, \ref{finite paths boundary paths}.
\end{proof}

Let $E$ be a topological graph and let $\mu \in E^* \amalg E^\infty$. From now on, whenever we say $\mu$ is a boundary path we mean that $\mu$ is a boundary path in the sense of Definition~\ref{def of boundary path of top graph} unless specified otherwise.

Since the product topology on $E^\infty$ may not be locally compact in general it is not obvious how to endow  the boundary path space $\partial E$ with a locally compact Hausdorff topology.
In \cite{Yeend:JOT07, Yeend:CM06} Yeend defined such a topology on the boundary path space of a topological higher rank graph.
So using the identification of Proposition~\ref{def of boundary path coincide with Yeend's},
we can endow the boundary path space $\partial E$ with the locally compact Hausdorff topology used by Yeend.

The following definition is a slight modification of \cite[Proposition~3.6]{Yeend:JOT07} for topological graphs.

\begin{defn}\label{def top on partial E}
Let $E$ be a topological graph. For a subset $S \subset E^*$, denote by $Z(S):=\{\mu \in \partial E: \text{ either } r(\mu) \in S, \text{ or there exists } 1 \leq i \leq \vert\mu\vert, \text{ such that }$ $\mu_1 \cdots\mu_i \in S\}$. We endow $\partial E$ with the topology generated by the basic open sets $Z(U) \cap Z(K)^c$, where $U$ is an open set of $E^*$ and $K$ is a compact set of $E^*$.
\end{defn}

It follows now using the identification of $\partial_YE$ with $\partial E$ above that $\partial E$ is a locally compact Hausdorff space. One verifies that $E_{\mathrm{sg}}^0$ is a closed subset of $\partial E$; that $Z(U)$ is open for every open subset $U \subset E^*$; and that $Z(K)$ is compact for every compact subset $K \subset E^*$.

\begin{lemma}\label{convergent net in par E}
Let $E$ be a topological graph. Fix a sequence $(\mu^{(n)})_{n=1}^\infty \subset \partial E$, and fix $\mu\in \partial E$. Then $\mu^{(n)} \to \mu$ if and only if
\begin{enumerate}
\item\label{r(mu^n) to r(mu)} $r(\mu^{(n)}) \to r(\mu)$;
\item\label{mu^n_i to mu_i} for $1 \leq i \leq \vert\mu\vert$ with $i\neq \infty$, there exists $N \geq 1$ such that $\vert\mu^{(n)}\vert \geq i$ whenever $n \geq N$ and $(\mu^{(n)}_1  \cdots \mu^{(n)}_i)_{n \geq N} \to \mu_1 \cdots\mu_i$;
\item\label{wandering condition} if $\vert \mu\vert < \infty$, then for any compact set $K \subset E^1$, the set $\{n: \vert\mu^{(n)}\vert > \vert\mu\vert \text{ and }$ $\mu^{(n)}_{\vert\mu\vert+1} \in K\}$ is finite.
\end{enumerate}
\end{lemma}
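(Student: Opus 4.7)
The plan is to treat the two directions separately, exploiting two simple identifications of basic open sets: for $V \subset E^0$ open, $Z(V) = \{\lambda \in \partial E : r(\lambda) \in V\}$; and for $W \subset E^i$ open with $i \geq 1$, $Z(W) = \{\lambda \in \partial E : \vert\lambda\vert \geq i \text{ and } \lambda_1\cdots\lambda_i \in W\}$. Both follow because $W \cap E^j = \emptyset$ for $j \neq i$ in the disjoint union $E^* = \coprod_n E^n$.

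For the forward direction, conditions (1) and (2) follow directly by testing convergence against $Z(V)$ and $Z(W)$ for arbitrary open neighborhoods $V$ of $r(\mu)$ in $E^0$ and $W$ of $\mu_1\cdots\mu_i$ in $E^i$. The interesting case is (3). Writing $m := \vert\mu\vert$, I would choose a precompact open neighborhood $W$ of $\mu_1\cdots\mu_m \in E^m$ (or of $r(\mu) \in E^0$ if $m = 0$) and set
\[K_* := \{\alpha\beta : \alpha \in \overline{W},\ \beta \in K,\ s(\alpha) = r(\beta)\} \subset E^{m+1}.\]
This is compact as the image of a compact set under the continuous concatenation map, and $\mu \notin Z(K_*)$ since $\vert\mu\vert = m < m+1 = $ the length of paths in $K_*$. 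Hence $Z(K_*)^c$ is a basic open neighborhood of $\mu$, so eventually $\mu^{(n)} \notin Z(K_*)$; combining this with (2) applied to $W$, any $n$ with $\vert\mu^{(n)}\vert > m$ and $\mu^{(n)}_{m+1} \in K$ would force $\mu^{(n)}_1\cdots\mu^{(n)}_{m+1} \in K_*$, a contradiction.

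For the reverse direction, fix a basic open neighborhood $Z(U) \cap Z(K)^c$ of $\mu$. Since $K$ is compact in $E^* = \coprod_n E^n$, it meets only finitely many strata, so $K = \bigcup_{j=0}^{N} K_j$ with $K_j := K \cap E^j$ each compact. To get $\mu^{(n)} \in Z(U)$ eventually, split on whether $r(\mu) \in U$ or $\mu_1\cdots\mu_i \in U \cap E^i$ for some $1 \leq i \leq \vert\mu\vert$, and apply (1) or (2). To get $\mu^{(n)} \notin Z(K)$ eventually, exclude each possibility separately: (1) rules out $r(\mu^{(n)}) \in K_0$ (since $r(\mu) \notin K_0$ and $K_0$ is closed); (2) rules out $\mu^{(n)}_1\cdots\mu^{(n)}_j \in K_j$ for each $1 \leq j \leq \min(N, \vert\mu\vert)$; and when $\vert\mu\vert = m < \infty$, applying (3) to the compact subset of $E^1$ obtained by projecting $\bigcup_{m < j \leq N} K_j$ onto the $(m+1)$-st coordinate handles the remaining indices $j > m$.

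The main obstacle is condition (3): designing the compact set $K_* \subset E^{m+1}$ in the forward direction so that $\mu$ avoids $Z(K_*)$ while any ``wandering'' $\mu^{(n)}$ enters it, and in the reverse direction extracting the right compact subset of $E^1$ from a generic compact $K \subset E^*$ by projecting the higher strata. Once these are in place, the rest is routine bookkeeping with the disjoint-union structure of $E^*$.
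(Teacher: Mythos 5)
Your proposal is correct and takes essentially the same route as the paper: your $K_*$ is exactly the paper's compact set $(\overline{U} \times K) \cap E^{\vert\mu\vert+1}$ used to establish condition~(3) in the forward direction, and your projection of the strata of $K$ above level $\vert\mu\vert$ onto the $(\vert\mu\vert+1)$-st coordinate is precisely the paper's compact set $K' \subset E^1$ in the converse. The only differences are bookkeeping: you handle the case $\vert\mu\vert=0$ explicitly and spell out the stratification $K=\bigcup_j K_j$, details the paper dismisses as straightforward after assuming $\vert\mu\vert\geq 1$.
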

\begin{proof}
Suppose that $\mu^{(n)} \to \mu$. Conditions~(\ref{r(mu^n) to r(mu)})--(\ref{mu^n_i to mu_i}) are straightforward to verify. Suppose that $\vert\mu\vert<\infty$. We may assume that $\vert\mu\vert\geq 1$. Fix a compact set $K \subset E^1$. Take a precompact neighborhood $U$ of $\mu$ in $E^{\vert\mu\vert}$. Then $\mu \in Z(U) \cap Z((\overline{U} \times K) \cap E^{\vert\mu\vert+1})^c$. Since $\mu^{(n)} \to \mu$, there exists $N \geq 1$ such that $\mu^{(n)} \in Z(U) \cap Z((\overline{U} \times K) \cap E^{\vert\mu\vert+1})^c$ whenever $n \geq N$. So the set $\{n: \vert\mu^{(n)}\vert > \vert\mu\vert \text{ and } \mu^{(n)}_{\vert\mu\vert+1} \in K\}$ is finite.

Conversely, suppose that Conditions~(\ref{r(mu^n) to r(mu)})--(\ref{wandering condition}) hold. Fix an open neighborhood $Z(U) \cap Z(K)^c$ of $\mu$.

Case $1$: $\vert\mu\vert=\infty$. It is straightforward to check that there exists $N \geq 1$ such that $\mu^{(n)} \in Z(U)$ whenever $n \geq N$. Since $\mu\in Z(K)^c$, we have $r(\mu), \mu_1,\mu_1\mu_2,\dots \notin K$. Conditions~(\ref{r(mu^n) to r(mu)}), (\ref{mu^n_i to mu_i}) imply that there exists $N' \geq N$ such that $\mu^{(n)} \in Z(K)^c$. So $\mu^{(n)} \to \mu$.

Case $2$: $\vert\mu\vert < \infty$. We may assume that $\vert\mu\vert\geq 1$. It is straightforward to check that there exists $N \geq 1$ such that $\vert\mu^{(n)}\vert\geq\vert\mu\vert, \mu^{(n)} \in Z(U)$ whenever $n \geq N$. Suppose that $K \cap (\amalg_{i =\vert\mu\vert+1}^{\infty}E^i)=\emptyset$. Then Conditions~(\ref{r(mu^n) to r(mu)}), (\ref{mu^n_i to mu_i}) imply that there exists $N' \geq N$ such that $\mu^{(n)} \in Z(K)^c$ whenever $n \geq N'$. Suppose that $K \cap (\amalg_{i =\vert\mu\vert+1}^{\infty}E^i)\neq\emptyset$. Then the set $K':=\{\nu_{\vert\mu\vert+1}:\nu \in K \cap (\amalg_{i =\vert\mu\vert+1}^{\infty}E^i)\}$ is a compact set of $E^1$. Since the set $\{n: \vert\mu^{(n)}\vert > \vert\mu\vert \text{ and } \mu^{(n)}_{\vert\mu\vert+1} \in K'\}$ is finite by Condition~\ref{wandering condition}, we deduce that there exists $N'' \geq N$ such that $\mu^{(n)} \in Z(K)^c$ whenever $n \geq N''$.
\end{proof}

It follows from Lemma~\ref{convergent net in par E} and \cite[Proposition~3.12]{Yeend:JOT07} that the topology on the boundary path space given in Definition~\ref{def top on partial E} agrees with the topology on the boundary path space given in \cite[Proposition~3.6]{Yeend:JOT07}.

\section{Factor Maps}

In this section, we recall the notion of factor maps between topological graphs introduced by Katsura in \cite[Section~2]{Katsura:IJM06}. Our definition of factor maps is a special case of Katsura's (see Remark~\ref{special case of Katsura's factor map}).

\begin{defn}
Let $E=(E^0,E^1,r_E,s_E), F=(F^0,F^1,r_F,s_F)$ be topological graphs and let $m^0:F^0 \to E^0, m^1:F^1 \to E^1$ be proper continuous maps. Then the pair $m:=(m^0,m^1)$ is called a \emph{factor map} from $F$ to $E$ if
\begin{enumerate}
\item $r_E \circ m^1=m^0 \circ r_F, s_E \circ m^1=m^0 \circ s_F$; and
\item for $e \in E^1, u \in F^0$, if $s_E(e)=m^0(u)$, then there exists a unique $f \in F^1$, such that $m^1(f)=e, s_F(f)=u$.
\end{enumerate}
Moreover, the factor map is called \emph{regular} if $m^0(F_{\mathrm{sg}}^0) \subset E_{\mathrm{sg}}^0$.
\end{defn}

\begin{rmk}\label{special case of Katsura's factor map}
By \cite[Lemma~2.7]{Katsura:IJM06}, we are able to give some equivalent conditions under which factor maps are regular. The factor map is regular if and only if $(m^0)^{-1}(E_{\mathrm{rg}}^0) \subset F_{\mathrm{rg}}^0$ if and only if for any $u \in F^0$ with $m^0(u) \in E_{\mathrm{rg}}^0$, we have $r_F^{-1}(u) \neq \emptyset$.
\end{rmk}

\begin{rmk}
Our definition of factor maps is indeed a special case of the one defined by Katsura in \cite{Katsura:IJM06}. In our case, we can extend $m^0$ continuously to the one-point compactification of $F^0$ by sending $\infty$ to $\infty$, and extend $m^1$ in the same way. Then we get a factor map in the sense of \cite[Definitions~2.1, 2.6]{Katsura:IJM06}.
\end{rmk}

The proofs of the following two propositions are similar to \cite[Propositions~2.9, 2.10]{Katsura:IJM06}. Consequently we just state these results without proofs.

\begin{prop}\label{homo bet twisted top graph alg}  
Let $E=(E^0,E^1,r_E,s_E), F=(F^0,F^1,r_F,s_F)$ be topological graphs, let $m:=(m^0,m^1)$ be a regular factor map from $F$ to $E$, and let $p_E:\mathbf{B}_E \to E^1$ be a principal circle bundle over $E^1$. Denote by $p_F:\mathbf{B}_F \to F^1$ the principal circle bundle which is the pullback of $\mathbf{B}_E$ by $m^1$. Denote by $m_*^1: X(E,\mathbf{B}_E) \to X(F,\mathbf{B}_F)$ the induced linear map from $m^1$, and denote by $m_*^0:C_0(E^0) \to C_0(F^0)$ the induced homomorphism from $m^0$. Let $(j_{X,E},j_{A,E})$ be the universal covariant  representation of $X(E,\mathbf{B}_E)$ into $\mathcal{O}(E,\mathbf{B}_E)$, and let $(j_{X,F},j_{A,F})$ be the universal covariant representation of $X(F,\mathbf{B}_F)$ into $\mathcal{O}(F,\mathbf{B}_F)$. Then $(j_{X,F} \circ m_*^1,j_{A,F} \circ m_*^0)$ is a covariant  representation of $X(E,\mathbf{B}_E)$ into $\mathcal{O}(F,\mathbf{B}_F)$. Hence there exists a unique homomorphism $h:\mathcal{O}(E,\mathbf{B}_E) \to \mathcal{O}(F,\mathbf{B}_F)$ such that $h \circ j_{X,E}=j_{X,F} \circ m_*^1, h \circ j_{A,E}=j_{A,F} \circ m_*^0$. Moreover, $h$ is injective if and only if $m^0$ is surjective.
\end{prop}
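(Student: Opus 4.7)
The plan is to adapt Katsura's proofs of \cite[Propositions~2.9, 2.10]{Katsura:IJM06} to the twisted setting. First, the pullback construction gives a canonical $\mathbb{T}$-equivariant bundle map $\widetilde{m}^1 : \mathbf{B}_F \to \mathbf{B}_E$ covering $m^1$, so setting $m_*^1(x) := x \circ \widetilde{m}^1$ yields an equivariant continuous function on $\mathbf{B}_F$. Because $m^1$ is proper, $p_F(\supp m_*^1(x)) \subset (m^1)^{-1}(p_E(\supp x))$ is compact, so $m_*^1$ sends $C_c^e(\mathbf{B}_E)$ into $C_c^e(\mathbf{B}_F)$; similarly $m_*^0(f) := f \circ m^0$ lands in $C_0(F^0)$ by properness of $m^0$.

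Next, I would verify that the pair $(j_{X,F}\circ m_*^1, j_{A,F}\circ m_*^0)$ satisfies the two Toeplitz axioms. The module-action compatibility is immediate from $r_E \circ m^1 = m^0 \circ r_F$ and $s_E \circ m^1 = m^0 \circ s_F$. For the inner-product identity, the bijectivity condition in the definition of a factor map yields, for each $u \in F^0$, a bijection $s_F^{-1}(u) \to s_E^{-1}(m^0(u))$ via $f \mapsto m^1(f)$, and under this bijection $[m_*^1 x \vert m_*^1 y](f) = [x \vert y](m^1(f))$, so that
\[
\langle m_*^1 x, m_*^1 y\rangle_{C_0(F^0)}(u) = \sum_{s_F(f)=u}[x\vert y](m^1(f)) = \langle x, y\rangle_{C_0(E^0)}(m^0(u)) = m_*^0(\langle x, y\rangle_{C_0(E^0)})(u).
\]

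For covariance, $J_{X(E,\mathbf{B}_E)}$ coincides with $C_0(E_{\mathrm{rg}}^0)$ (and similarly for $F$) by the standard analysis of topological graph correspondences. By Remark~\ref{special case of Katsura's factor map}, regularity of $m$ is equivalent to $(m^0)^{-1}(E_{\mathrm{rg}}^0) \subset F_{\mathrm{rg}}^0$, hence $m_*^0(J_{X(E,\mathbf{B}_E)}) \subset J_{X(F,\mathbf{B}_F)}$. For a nonnegative $f \in C_c(E_{\mathrm{rg}}^0)$, the formula from \cite[Proposition~3.10]{HuiLiTwisted1} writes $\phi_E(f) = \sum_i \Theta_{x_i,x_i}$ in terms of a finite cover of $r_E^{-1}(\supp f)$ by precompact open $s_E$-sections with local sections of $\mathbf{B}_E$. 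I would pull these data back through $m^1$ and $\widetilde{m}^1$ to produce admissible cover and section data for $r_F^{-1}(\supp m_*^0(f))$, obtaining a decomposition $\phi_F(m_*^0 f) = \sum_i \Theta_{m_*^1 x_i, m_*^1 x_i}$. Applying $(j_{X,F}\circ m_*^1)^{(1)}$ then gives the covariance relation $j_{A,F}(m_*^0 f) = (j_{X,F}\circ m_*^1)^{(1)}(\phi_E(f))$, and universality of $(j_{X,E}, j_{A,E})$ produces the desired homomorphism $h$.

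For the final clause, $h$ is automatically equivariant with respect to the canonical gauge actions on $\mathcal{O}(E,\mathbf{B}_E)$ and $\mathcal{O}(F,\mathbf{B}_F)$, so the gauge-invariant uniqueness theorem for Cuntz-Pimsner algebras reduces injectivity of $h$ to injectivity of $h \circ j_{A,E} = j_{A,F} \circ m_*^0$ on $C_0(E^0)$. Since $j_{A,F}$ is always injective and $m^0$ is proper (hence has closed range), $m_*^0 = (\cdot)\circ m^0$ is injective if and only if $m^0(F^0)=E^0$. The step I expect to be the main obstacle is the covariance verification: pulling the local-section and partition-of-unity data back along $(m^0,m^1,\widetilde{m}^1)$ so that the two rank-one decompositions of $\phi$ match term-by-term under $m_*^1$ requires careful bookkeeping with $s$-sections, and one must check that refining the pulled-back cover does not disturb the decomposition.
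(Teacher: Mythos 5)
Your proof is correct and takes essentially the same approach as the paper, which omits the argument entirely with the remark that it is ``similar to \cite[Propositions~2.9, 2.10]{Katsura:IJM06}'': your adaptation---pulling back the cover, local-section, and partition-of-unity data of \cite[Proposition~3.10]{HuiLiTwisted1} along $(m^0,m^1,\widetilde{m}^1)$ to match the rank-one decompositions of $\phi_E(f)$ and $\phi_F(m_*^0 f)$ term by term, then invoking the gauge-invariant uniqueness theorem together with injectivity of $j_{A,E}$ and properness of $m^0$ for the injectivity criterion---is precisely that intended proof. Your closing worry is unfounded: since \cite[Proposition~3.10]{HuiLiTwisted1} asserts $\phi(f)=\sum_{i}\Theta_{x_i,x_i}$ for \emph{any} admissible choice of data, and the pulled-back cover $\{(m^1)^{-1}(N_i)\}$, sections, and partition functions $h_i \circ m^1$ are admissible for $m_*^0 f$ (the factor-map uniqueness condition makes each $(m^1)^{-1}(N_i)$ an $s_F$-section), no refinement-independence check is needed.
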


\begin{prop}
Let $E=(E^0,E^1,r_E,s_E), F=(F^0,F^1,r_F,s_F), G=(G^0,G^1$, $r_G,s_G)$ be topological graphs, let $m=(m^0,m^1)$ be a regular factor map from $F$ to $E$, let $n=(n^0,n^1)$ be a regular factor map from $G$ to $F$, and let $p_E:\mathbf{B}_E \to E^1$ be a principal circle bundle. Denote by $p_F:\mathbf{B}_F \to F^1$ the principal circle bundle which is the pullback of $\mathbf{B}_E$ by $m_1$, and denote by $p_G:\mathbf{B}_G \to G^1$ the principal circle bundle which is the pullback of $\mathbf{B}_E$ by $m_1 \circ n_1$. We have the following.
\begin{enumerate}
\item $m \circ n:=(m^0 \circ n^0,m^1 \circ n^1)$ is a regular factor map from $G$ to $E$.
\item Let $h_1: \mathcal{O}(E,\mathbf{B}_E) \to \mathcal{O}(F,\mathbf{B}_F)$ be the homomorphism induced from the regular factor map $m$, let $h_2: \mathcal{O}(F,\mathbf{B}_F) \to \mathcal{O}(G,\mathbf{B}_G)$ be the homomorphism induced from $n$, and let $h_3: \mathcal{O}(E,\mathbf{B}_E) \to \mathcal{O}(G,\mathbf{B}_G)$ be the homomorphism induced from $m \circ n$. Then $h_3=h_2 \circ h_1$.
\end{enumerate}
\end{prop}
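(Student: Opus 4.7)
The plan is to verify each assertion in turn, starting with the factor-map properties in (1), then using functoriality of bundle pullback together with the universal property of the Cuntz--Pimsner algebra for (2).

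For (1), I will check the three requirements in the definition one at a time. Properness and continuity of $m^0 \circ n^0$ and $m^1 \circ n^1$ are immediate from the fact that composition of proper continuous maps is proper continuous. The identities $r_E\circ(m^1\circ n^1)=(m^0\circ n^0)\circ r_G$ and $s_E\circ(m^1\circ n^1)=(m^0\circ n^0)\circ s_G$ follow by chaining the analogous identities for $m$ and $n$. For the unique-lift property, given $e\in E^1$ and $w\in G^0$ with $s_E(e)=m^0(n^0(w))$, I first apply the factor-map property of $m$ to obtain a unique $f\in F^1$ with $m^1(f)=e$ and $s_F(f)=n^0(w)$; then the factor-map property of $n$ yields a unique $g\in G^1$ with $n^1(g)=f$ and $s_G(g)=w$, and this $g$ is easily checked to be unique with $(m^1\circ n^1)(g)=e$ and $s_G(g)=w$. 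Regularity follows from the convenient reformulation recorded in Remark~\ref{special case of Katsura's factor map}: since $n^0(G_{\mathrm{sg}}^0)\subset F_{\mathrm{sg}}^0$ and $m^0(F_{\mathrm{sg}}^0)\subset E_{\mathrm{sg}}^0$, we get $(m^0\circ n^0)(G_{\mathrm{sg}}^0)\subset E_{\mathrm{sg}}^0$.

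For (2), the key point is that pullback of principal circle bundles is functorial. The bundle $\mathbf{B}_G$, defined as the pullback of $\mathbf{B}_E$ along $m^1\circ n^1$, is canonically identified with the pullback along $n^1$ of $\mathbf{B}_F$; under this identification the bundle map $\mathbf{B}_G\to\mathbf{B}_E$ factors as $\mathbf{B}_G\to\mathbf{B}_F\to\mathbf{B}_E$ covering $n^1$ and $m^1$ respectively. Consequently, the induced linear maps on correspondences compose: $(m\circ n)^1_* = n^1_*\circ m^1_*$, since pulling back an equivariant section of $\mathbf{B}_E$ to $\mathbf{B}_G$ along the composed bundle map coincides with first pulling back to $\mathbf{B}_F$ and then to $\mathbf{B}_G$. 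At the coefficient level the analogous statement $(m\circ n)^0_* = n^0_*\circ m^0_*$ is immediate.

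Combining these with the intertwining relations supplied by Proposition~\ref{homo bet twisted top graph alg}, one computes
\[
h_2\circ h_1\circ j_{X,E}=h_2\circ j_{X,F}\circ m^1_*=j_{X,G}\circ n^1_*\circ m^1_*=j_{X,G}\circ (m\circ n)^1_*,
\]
and similarly $h_2\circ h_1\circ j_{A,E}=j_{A,G}\circ (m\circ n)^0_*$. The uniqueness clause in the universal property of $\mathcal{O}(E,\mathbf{B}_E)$ then forces $h_2\circ h_1=h_3$. I expect the only substantive step to be the identification of $\mathbf{B}_G$ as an iterated pullback and the resulting equality $(m\circ n)^1_*=n^1_*\circ m^1_*$; the rest is bookkeeping around the universal property, and is parallel to Katsura's argument in \cite[Proposition~2.10]{Katsura:IJM06}.
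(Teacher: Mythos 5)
Your proof is correct and is essentially the argument the paper intends: the paper states this proposition without proof, remarking only that it is proved as in Propositions~2.9 and~2.10 of \cite{Katsura:IJM06}, and your universal-property computation, together with the canonical identification $(m^1\circ n^1)^*(\mathbf{B}_E)\cong (n^1)^*\bigl((m^1)^*(\mathbf{B}_E)\bigr)$ yielding $(m\circ n)^1_* = n^1_*\circ m^1_*$, is exactly the twisted analogue of Katsura's argument, with the iterated-pullback step being the only genuinely new ingredient. One cosmetic remark: the regularity of $m\circ n$ follows by chaining the definition directly, namely $(m^0\circ n^0)(G_{\mathrm{sg}}^0)\subset m^0(F_{\mathrm{sg}}^0)\subset E_{\mathrm{sg}}^0$, which is in fact what you write, rather than from the reformulation in the remark you cite.
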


\section{Twisted Groupoid $C^*$-algebras}

In this section, we deal with groupoids and groupoid $C^*$-algebras (see \cite{Renault:groupoidapproachto80}).

From now on we assume that all the topological spaces are second countable; and that all the locally compact groupoids are second-countable locally compact Hausdorff groupoids. A locally compact groupoid is said to be \emph{\'{e}tale} if its range map is a local homeomorphism.

\begin{defn}[{\cite[Remark~2.9]{Kumjian:JOT88}}]\label{defn of top twist}
Let $\Gamma$ be an \'{e}tale groupoid, and let $\Lambda$ be a locally compact groupoid. Suppose that $\Gamma$ and $\Lambda$ have a common unit space $\Gamma^0$. We call $\Lambda$ a \emph{topological twist} over $\Gamma$ if there is a sequence of groupoid homomorphisms
\[
\mathbb{T} \times \Gamma^0 \xrightarrow{i} \Lambda \xrightarrow{p} \Gamma
\]
such that
\begin{enumerate}
\item\label{i(T times Gamma^0)=p^{-1}(Gamma^0)} $i$ is a homeomorphism onto $p^{-1}(\Gamma^0)$;
\item $p$ is a continuous open surjection and admits continuous local sections; and
\item\label{condition 3} $\lambda i(z,s(\lambda)) \lambda^{-1}=i(z,r(\lambda))$, for all $z \in \mathbb{T}$, and all $\lambda \in \Lambda$.
\end{enumerate}
\end{defn}

By \cite[Remark~2.9]{Kumjian:JOT88}, we are able to define a free and proper circle action on $\Lambda$ by $z \cdot \lambda:=i(z,r(\lambda))\lambda$. The quotient space $\Lambda / \mathbb{T}$ is homeomorphic to $\Gamma$ via the identification map $[\lambda] \mapsto p(\lambda)$. Since $p$ admits continuous local sections, $p:\Lambda \to \Gamma$ can be regarded as a principal circle bundle. For $u \in \Gamma^0$, we have $r^{-1}(u)$ is a discrete subset of $\Gamma$ because $r:\Gamma \to \Gamma^0$ is a local homeomorphism. Since $p:\Lambda \to \Gamma$ is a principal circle bundle and since $r^{-1}(u)=p^{-1}(r^{-1}(u))$, we get $r^{-1}(u)$ is a disjoint union of circles. So there is a natural measure on $r^{-1}(u)$ and $\Lambda$ has a left Haar system $\{\mu^u\}_{u \in \Gamma^0}$ (see \cite[Page~252]{Kumjian:JOT88}).

\begin{defn}\cite[Page~252]{Kumjian:JOT88}\label{define the twisted groupoid C^*-alg}
Let $\Gamma$ be an \'{e}tale groupoid and fix a topological twist over $\Gamma$
\[
\mathbb{T} \times \Gamma^0 \xrightarrow{i} \Lambda \xrightarrow{p} \Gamma.
\]
The closure of $\{f \in C_c(\Lambda): f(z \cdot \lambda)=zf(\lambda) \text{ for all } z \in \mathbb{T}\}$ under the $C^*$-norm of the groupoid $C^*$-algebra $C^*(\Lambda)$ is called the \emph{twisted groupoid $C^*$-algebra} and is denoted by $C^*(\Gamma,\Lambda)$.
\end{defn}

The convolution product (see \cite[Page~48]{Renault:groupoidapproachto80}) of $C^*(\Gamma,\Lambda)$ is given as follows. For $f,g \in \{f \in C_c(\Lambda): f(z \cdot \lambda)=zf(\lambda) \text{ for all } z \in \mathbb{T}\}$, we have
\begin{align*}
f*g(\lambda)&=\int_{\lambda' \in r^{-1}(s(\lambda))} f(\lambda\lambda')g(\lambda'^{-1})\, \mathrm{d}\mu^{s(\lambda)}(\lambda')\\
&=\sum_{\gamma \in r^{-1}(s(\lambda))}\int_{\lambda' \in p^{-1}(\gamma)} f(\lambda\lambda')g(\lambda'^{-1})
\intertext{note that $f(\lambda\lambda')g(\lambda'^{-1})$ is constant on each fibre $p^{-1}(\gamma)$ and so}
&=\sum_{\gamma \in r^{-1}(s(\lambda))} f(\lambda\lambda_\gamma)g(\lambda_\gamma^{-1}).
\end{align*}
where $\gamma \mapsto \lambda_\gamma$ is any section of $p$.
\begin{rmk}\label{C*(Gamma,Lambda) contains C_0(Gamma^0)}
It follows from \cite{DeaconuKumjianEtAl:JOT01, Renault:IMSB08} that there is an injective homomorphism $\pi:C_0(\Gamma^0) \to C^*(\Gamma,\Lambda)$ such that for $h \in C_c(\Gamma^0), \pi(h)=\widetilde{h}$, where
\[
\widetilde{h}(\lambda):= \begin{cases} zh(t), &\text{ if $(z,t) \in \mathbb{T} \times \Gamma^0, \lambda=i(z,t)$;}
   \\ 0, &\text{ if $\lambda \notin p^{-1}(\Gamma^0)$.} \end{cases}
\]
\end{rmk}

Now we start to look at the groupoid induced from a singly generated dynamical system (see Page~2) and investigate its topological twists.

\begin{defn}[{\cite[Definition~2.4]{Renault:00}}]\label{define D-R groupoid}
Let $T$ be a locally compact Hausdorff space and let $\sigma:\dom(\sigma) \to \ran(\sigma)$ be a partial local homeomorphism (see Page~2). Define the \emph{Renault-Deaconu groupoid} $\Gamma(T,\sigma)$ as follows:
\begin{align*}
\Gamma(T,\sigma):=\{(t_1,k_1-k_2,t_2) \in T \times \mathbb{Z} \times T : \ & k_1,k_2 \geq 0,t_1 \in \dom(\sigma^{k_1}),
\\&t_2 \in \dom(\sigma^{k_2}), \sigma^{k_1}(t_1)=\sigma^{k_2}(t_2) \}.
\end{align*}
Define the unit space $\Gamma^0:=\{(t,0,t):t \in T\}$. For $(t_1,n,t_2),(t_2,m,t_3) \in \Gamma(T,\sigma)$, define the multiplication, the inverse, the source and the range map by
\[
(t_1,n,t_2)(t_2,m,t_3):=(t_1,n+m,t_3); \ \ \ \ (t_1,n,t_2)^{-1}:=(t_2,-n,t_1);
\]
\[
 r(t_1,n,t_2):=(t_1,0,t_1); \ \ \ \  s(t_1,n,t_2):=(t_2,0,t_2).
\]
Define the topology on $\Gamma(T,\sigma)$ to be generated by the basic open set
\[
\mathcal{U}(U,V,k_1,k_2):=\{(t_1,k_1-k_2,t_2):t_1 \in U,t_2 \in V,\sigma^{k_1}(t_1)=\sigma^{k_2}(t_2)\},
\]
where $U \subset \dom(\sigma^{k_1}), V \subset \dom(\sigma^{k_2})$ are open in $T,\sigma^{k_1}$ is injective on $U$, and $\sigma^{k_2}$ is injective on $V$. For $k_1,k_2 \geq 0$, define an open subset of $\Gamma(T,\sigma)$ by
\[
\Gamma_{k_1,k_2}:=\{(t_1,k_1-k_2,t_2): t_1 \in \dom(\sigma^{k_1}),t_2 \in \dom(\sigma^{k_2}), \sigma^{k_1}(t_1)=\sigma^{k_2}(t_2)\}.
\]
\end{defn}

The Renault-Deaconu groupoid $\Gamma(T,\sigma)$ is an \'{e}tale groupoid.

We give the characterization of convergent nets in $\Gamma(T,\sigma)$. Fix $((t_{1,\alpha},n_\alpha$, $t_{2,\alpha}))_{\alpha \in A}$ in $\Gamma(T,\sigma)$, and fix $(t_1,n,t_2) \in \Gamma(T,\sigma)$. Find $k_1,k_2 \geq 0$ such that
\begin{enumerate}
\item $n=k_1-k_2, t_1 \in \dom(\sigma^{k_1}),t_2 \in \dom(\sigma^{k_2}), \sigma^{k_1}(t_1)=\sigma^{k_2}(t_2)$; and that
\item if there exist $k_1',k_2' \geq 0$ satisfying that $k_1' \leq k_1, k_2' \leq k_2, n=k_1'-k_2'$, $\sigma^{k_1'}(t_1)=\sigma^{k_2'}(t_2)$, then we have $k_1'=k_1, k_2'=k_2$.
\end{enumerate}
we have $(t_{1,\alpha},n_\alpha,t_{2,\alpha}) \to (t_1,n,t_2)$ if and only if $t_{1,\alpha} \to t_1, t_{2,\alpha} \to t_2$, and there exists $\alpha_0 \in A$ such that whenever $\alpha \geq \alpha_0$, we have $n_\alpha=n, t_{1,\alpha} \in \dom(\sigma^{k_1}),t_{2,\alpha} \in \dom(\sigma^{k_2}), \sigma^{k_1}(t_{1,\alpha})=\sigma^{k_2}(t_{2,\alpha})$.

\begin{lemma}\label{piece principal bundles open version}
Let $Z$ be a locally compact Hausdorff space, let $\{Z_n\}_{n \geq 1}$ be a countable open cover of $Z$, and let $\{ p_n:\mathbf{B}_n \to Z_n \}$ be a family of principal circle bundles. Suppose that for $n,m \geq 1$, there exists a homeomorphism $h_{n,m}:p_n^{-1}(Z_n \cap Z_m) \to p_m^{-1}(Z_n \cap Z_m)$ such that $p_m \circ h_{n,m}=p_n,h_{n,m}(z \cdot b)=z \cdot h_{n,m}(b)$ for all $z \in \mathbb{T}, b \in p_n^{-1}(Z_n \cap Z_m)$, and $h_{m,l} \circ h_{n,m}=h_{n,l}$ on $p_n^{-1}(Z_n \cap Z_m \cap Z_l)$. Define
\[
\mathbf{B}:=\amalg_{n \geq 1}\mathbf{B}_n/ \{(b,n) \sim (h_{n,m}(b),m): b \in p_n^{-1}(Z_n \cap Z_m)\}
\]
endowed with the quotient topology. For $N \geq 1$, for a sequence $(b_i,N)_{i=1}^{\infty} \subset \mathbf{B}$, and for $(b,N) \in \mathbf{B}$, we have $(b_i,N) \to (b,N)$ in $\mathbf{B}$ if and only if $b_i \to b$ in $\mathbf{B}_N$. Moreover, $\mathbf{B}$ is a (second-countable) principal circle bundle over $Z$.
\end{lemma}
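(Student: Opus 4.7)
My plan is to verify that the relation $\sim$ is a well-defined equivalence relation, then establish the convergence criterion via an open-embedding argument, check Hausdorffness and second countability, and finally derive the principal bundle structure.

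First, setting $n=m=l$ in the cocycle identity $h_{m,l}\circ h_{n,m}=h_{n,l}$ gives $h_{n,n}\circ h_{n,n}=h_{n,n}$, and since $h_{n,n}$ is a bijection this forces $h_{n,n}=\id$; hence $\sim$ is reflexive. The identity $h_{n,m}\circ h_{m,n}=h_{n,n}=\id$ then forces $h_{m,n}=h_{n,m}^{-1}$, giving symmetry, and transitivity is the cocycle condition. I would define $p:\mathbf{B}\to Z$ by $p([b,n]):=p_n(b)$, well-defined by $p_m\circ h_{n,m}=p_n$, and the circle action by $z\cdot[b,n]:=[z\cdot b,n]$, well-defined by the equivariance of $h_{n,m}$.

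The heart of the argument is to show that each canonical map $q_N:\mathbf{B}_N\to\mathbf{B}$, $b\mapsto[b,N]$, is an open topological embedding whose image is $p^{-1}(Z_N)$. Continuity is automatic; injectivity follows from $h_{N,N}=\id$. For openness, given an open $U\subset\mathbf{B}_N$, I would describe the saturation of $U$ inside $\amalg_n\mathbf{B}_n$ under $\sim$: its intersection with $\mathbf{B}_m$ equals $h_{N,m}\bigl(U\cap p_N^{-1}(Z_N\cap Z_m)\bigr)$, which is open in $\mathbf{B}_m$ because $h_{N,m}$ is a homeomorphism defined on an open subset. Hence the saturation is open in the disjoint union, so $q_N(U)$ is open in $\mathbf{B}$ by the universal property of the quotient topology. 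The stated convergence criterion follows at once: $(b_i,N)\to(b,N)$ in $\mathbf{B}$ if and only if $b_i\to b$ in $\mathbf{B}_N$, since $q_N$ is a homeomorphism onto its image.

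Next I would verify that $\mathbf{B}$ is Hausdorff. Given distinct points $[b,n]$ and $[b',m]$, if $p_n(b)\neq p_m(b')$ in $Z$ then disjoint open neighborhoods in $Z$ pull back under $p$ to separating open sets (note that $p$ is continuous and open, because $p\circ q_n=p_n$ is open and the images $p^{-1}(Z_n)$ cover $\mathbf{B}$). If instead $p_n(b)=p_m(b')=v\in Z_n\cap Z_m$, then $[b,n]=[h_{n,m}(b),m]$, and in $\mathbf{B}_m$ the two distinct points $h_{n,m}(b)$ and $b'$ lying in the same fiber of $p_m$ can be separated by open sets which, pushed forward under the open embedding $q_m$, give separating open neighborhoods in $\mathbf{B}$. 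Second countability of $\mathbf{B}$ is immediate from second countability of each $\mathbf{B}_n$ (locally $Z_n\times\mathbb{T}$) together with the fact that $\amalg_n\mathbf{B}_n\to\mathbf{B}$ is a continuous open surjection.

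To conclude, since each $q_n$ is a $\mathbb{T}$-equivariant homeomorphism of $\mathbf{B}_n$ onto the open set $p^{-1}(Z_n)$ intertwining $p_n$ and $p$, and the sets $\{p^{-1}(Z_n)\}$ cover $\mathbf{B}$, local triviality of each $p_n$ transfers to local triviality of $p$, and freeness and properness of the circle action on $\mathbf{B}$ follow from the corresponding properties on each $\mathbf{B}_n$. The main obstacle is the Hausdorffness verification, since one must rule out pathological ``limit identifications'' creeping into the closure of the graph of $\sim$; once the open-embedding character of $q_n$ is established, everything else is routine bookkeeping.
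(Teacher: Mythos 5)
Your proof is correct: the derivation of $h_{n,n}=\id$ and $h_{m,n}=h_{n,m}^{-1}$ from the cocycle identity, the open-embedding argument for $q_N:\mathbf{B}_N\to\mathbf{B}$ via openness of saturations (which yields the convergence criterion immediately), the two-case Hausdorffness check, and the transfer of the free proper $\mathbb{T}$-action and local triviality to $\mathbf{B}$ are all sound. The paper dismisses this lemma with ``It is straightforward to verify,'' and your argument is exactly the routine verification the authors intended, so you have simply (and correctly) supplied the omitted details.
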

\begin{proof}
It is straightforward to verify.
\end{proof}
Next we generalize  \cite[Theorem~3.1]{DeaconuKumjianEtAl:JOT01} so that it applies to partial local homeomorphisms and not just local homeomorphisms.  The proof is similar.
\begin{thm}\label{1-1 corr bet twisted groupoid and line bundle}
Let $T$ be a locally compact Hausdorff space, let $\sigma:\dom(\sigma) \to \ran(\sigma)$ be a partial local homeomorphism, and let $p:\mathbf{B} \to \dom(\sigma)$ be a principal circle bundle. Denote by $j:\dom(\sigma) \to \Gamma(T,\sigma)$ the embedding such that $j(t)=(t,1,\sigma(t))$.
Then there exists a topological twist $\mathbb{T} \times \Gamma^0 \xrightarrow{i} \Lambda \xrightarrow{p'} \Gamma(T,\sigma)$, such that the pullback bundle $j^*(\Lambda)$ of $\Lambda$ by $j$ is isomorphic to $\mathbf{B}$.
\end{thm}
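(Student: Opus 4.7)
The plan is to construct the twist $\Lambda$ piecewise over the open decomposition $\Gamma(T,\sigma) = \bigcup_{k_1, k_2 \geq 0} \Gamma_{k_1,k_2}$ using the star product of principal circle bundles from Section~3, glue the local pieces via Lemma~\ref{piece principal bundles open version}, and then equip the resulting bundle with a groupoid structure. The strategy mirrors the proof of \cite[Theorem~3.1]{DeaconuKumjianEtAl:JOT01} for local homeomorphisms, with only minor additional care needed for the partial domain of $\sigma$.

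For each $k_1, k_2 \geq 0$ I would first define a principal circle bundle $\Lambda_{k_1, k_2} \to \Gamma_{k_1, k_2}$ whose fiber at $(t_1, k_1 - k_2, t_2)$ is the circle
\[
\mathbf{B}_{t_1} \star \mathbf{B}_{\sigma(t_1)} \star \cdots \star \mathbf{B}_{\sigma^{k_1 - 1}(t_1)} \star \overline{\mathbf{B}}_{\sigma^{k_2 - 1}(t_2)} \star \cdots \star \overline{\mathbf{B}}_{t_2},
\]
obtained as the pullback of $\mathbf{B}^{\star k_1} \star \overline{\mathbf{B}^{\star k_2}}$ along the map $(t_1, k_1 - k_2, t_2) \mapsto \bigl((t_1, \ldots, \sigma^{k_1 - 1}(t_1)), (t_2, \ldots, \sigma^{k_2 - 1}(t_2))\bigr)$, using the convention $\mathbf{B}^{\star 0} := \mathbb{T}$. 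On the overlap $\Gamma_{k_1, k_2} \cap \Gamma_{k_1 + m, k_2 + m}$, the two local models differ by $m$ extra factor-pairs $\mathbf{B}_{\sigma^{k_1 + j}(t_1)} \star \overline{\mathbf{B}}_{\sigma^{k_2 + j}(t_2)}$ lying over identical base points (since $\sigma^{k_1 + j}(t_1) = \sigma^{k_2 + j}(t_2)$ for $0 \leq j < m$); each such pair is canonically trivial by the closing remark of Section~3 via $(b, \overline{b'}) \mapsto b/b'$. Composing these trivializations yields a continuous, equivariant bundle isomorphism between the restrictions satisfying the triple-overlap cocycle condition, so Lemma~\ref{piece principal bundles open version} assembles the $\Lambda_{k_1, k_2}$ into a single principal circle bundle $p' : \Lambda \to \Gamma(T,\sigma)$.

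Next I would equip $\Lambda$ with groupoid structure. Multiplication of $\lambda_a$ over $(t_1, n_1, t_2) \in \Gamma_{k_1, k_2}$ by a composable $\lambda_b$ over $(t_2, n_2, t_3) \in \Gamma_{l_1, l_2}$ lands in $\Lambda_{k_1 + l_1, k_2 + l_2}$ over $(t_1, n_1 + n_2, t_3)$, defined by the natural concatenation of the underlying $\mathbf{B}^{\star \bullet}$-fiber representatives; inversion interchanges the two factors, sending a fiber of $\Lambda_{k_1, k_2}$ to a fiber of $\Lambda_{k_2, k_1}$. The embedding $i : \mathbb{T} \times \Gamma^0 \to \Lambda$ is the identification of $\Lambda_{0, 0}|_{\Gamma^0}$ with $\mathbb{T} \times \Gamma^0$ arising from $\mathbf{B}^{\star 0} = \mathbb{T}$. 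Conditions~(1) and~(2) of Definition~\ref{defn of top twist} are immediate because $p'$ is a principal $\mathbb{T}$-bundle admitting continuous local sections, while condition~(3) reduces to a direct fiberwise computation. For the pullback claim, given $t \in \dom(\sigma)$ the element $j(t) = (t, 1, \sigma(t))$ lies in $\Gamma_{1, 0}$ and the fiber of $\Lambda_{1, 0}$ over $j(t)$ is $\mathbf{B}_t \star \mathbb{T} \cong \mathbf{B}_t$ canonically, giving $j^*(\Lambda) \cong \mathbf{B}$ as principal circle bundles.

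The main obstacle will be verifying that the multiplication defined locally on each $\Lambda_{k_1, k_2} \times \Lambda_{l_1, l_2}$ descends consistently through the gluing maps and is globally continuous on $\Lambda \times_{s, r} \Lambda$. This reduces to checking that star-concatenation commutes with enlargement of representatives $(k_i) \mapsto (k_i + m)$ — equivalently, that inserting then canceling a pair $\mathbf{B}_s \star \overline{\mathbf{B}}_s$ in the middle of a product is the identity. This is a diagrammatic verification using only the associativity and naturality of $\star$ together with the canonical diagonal trivialization from Section~3, and proceeds identically to the local-homeomorphism case in \cite{DeaconuKumjianEtAl:JOT01}.
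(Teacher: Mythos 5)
Your construction of the local bundles $\Lambda_{k_1,k_2}$ as pullbacks of $\mathbf{B}^{\star k_1}\star\overline{\mathbf{B}}^{\star k_2}$ along the orbit embeddings, the gluing over overlaps $\Gamma_{k_1,k_2}\cap\Gamma_{k_1+m,k_2+m}$ via the canonical trivialization $(b,\overline{b'})\mapsto b/b'$, the assembly through Lemma~\ref{piece principal bundles open version}, and the identification $j^*(\Lambda)\cong\mathbf{B}$ via $\Lambda_{1,0}$ all coincide with the paper's proof. The genuine gap is in the multiplication. You assert that the product of $\lambda_a$ over $(t_1,n_1,t_2)\in\Gamma_{k_1,k_2}$ with a composable $\lambda_b$ over $(t_2,n_2,t_3)\in\Gamma_{l_1,l_2}$ ``lands in $\Lambda_{k_1+l_1,k_2+l_2}$'' by natural concatenation, and that the consistency checks ``proceed identically to the local-homeomorphism case.'' Both claims fail exactly where partiality of $\sigma$ matters, which is the whole point of generalizing \cite[Theorem~3.1]{DeaconuKumjianEtAl:JOT01}. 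First, $(t_1,n_1+n_2,t_3)$ need not lie in $\Gamma_{k_1+l_1,k_2+l_2}$ at all: take $T=\{x,y,z\}$ discrete, $\dom(\sigma)=\{x,y\}$, $\sigma(x)=\sigma(y)=z$. Then $(x,0,y),(y,0,x)\in\Gamma_{1,1}$, but their product $(x,0,x)$ is not in $\Gamma_{2,2}$, which is empty because $\dom(\sigma^2)=\emptyset$. So the chart you designate to receive the product may not exist over the product, and the enlargements $(k_i)\mapsto(k_i+m)$ you invoke in your final paragraph are likewise unavailable when $t_1\notin\dom(\sigma^{k_1+l_1})$. (In the total case $\dom(\sigma)=T$ this never happens, which is why the concatenation recipe is harmless there.) Second, even setting domains aside, plain concatenation is not well defined: the positive part of $\lambda_b$ sits over the orbit $(t_2,\sigma(t_2),\dots,\sigma^{l_1-1}(t_2))$, whereas an element of $\Lambda_{k_1+l_1,k_2+l_2}$ over $(t_1,n_1+n_2,t_3)$ must have its positive factors over $(t_1,\dots,\sigma^{k_1+l_1-1}(t_1))$; since $\sigma^{k_1}(t_1)=\sigma^{k_2}(t_2)$, which differs from $t_2$ whenever $k_2\geq 1$, the relevant fibers lie over different base points and admit no canonical identification.

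The paper sidesteps both problems by defining the product only after choosing representatives with a \emph{common middle index}: $\lambda_1\in\Lambda_{k_1,k_2}$ and $\lambda_2\in\Lambda_{k_2,k_3}$, so that the conjugate part of $\lambda_1$ and the positive part of $\lambda_2$ lie over the same tuple $(t_2,\dots,\sigma^{k_2-1}(t_2))$; these middle factors are then cancelled pairwise through the scalars $b_i''/b_i'$, and the product lies in $\Lambda_{k_1,k_3}$ --- a cancellation product, not a concatenation. Such an alignment is always achievable within the domain constraints: if the middle indices are $k_2<l_1$, one enlarges $\lambda_1$ to middle index $l_1$, using $\sigma^{k_1}(t_1)=\sigma^{k_2}(t_2)$ and $t_2\in\dom(\sigma^{l_1})$ to verify $t_1\in\dom(\sigma^{k_1+l_1-k_2})$, and symmetrically in the other case. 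This domain bookkeeping is the one genuinely new verification relative to \cite{DeaconuKumjianEtAl:JOT01}, and it is precisely the step your proposal waves through. The argument is repairable by replacing your concatenation recipe with the aligned cancellation product, but as written the step you yourself flag as the main obstacle is set up in a chart that in general does not contain the product.
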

\begin{proof}
For $k_1,k_2 \geq 1$, we have a principal circle bundle $\mathbf{B}^{\star k_1} \star \overline{\mathbf{B}}^{\star k_2}$ over $(\prod_{i=1}^{k_1}\dom(\sigma)) \times (\prod_{j=1}^{k_2}\dom(\sigma))$. Denote by $\iota_{k_1,k_2}:\Gamma_{k_1,k_2} \to (\prod_{i=1}^{k_1}\dom(\sigma)) \times (\prod_{j=1}^{k_2}\dom(\sigma))$ the embedding
\[
\iota(t_1,k_1-k_2,t_2):=(t_1, \sigma(t_1), \dots,\sigma^{k_1-1}(t_1),t_2, \sigma(t_2), \dots,\sigma^{k_2-1}(t_2)).
\]
Denote by $p_{k_1,k_2}:\Lambda_{k_1,k_2} \to\Gamma_{k_1,k_2}$ the pullback bundle of $\mathbf{B}^{\star k_1} \star \overline{\mathbf{B}}^{\star k_2}$ by $\iota_{k_1,k_2}$.

For $k \geq 1$, there are embeddings $\iota_{k,0}:\Gamma_{k,0} \to \prod_{i=1}^{k}\dom(\sigma),\ \iota_{0,k}:\Gamma_{0,k} \to \prod_{i=1}^{k}\dom(\sigma)$, and similarly we get principal circle bundles $\Lambda_{k,0}$ over $\Gamma_{k,0}$ and  $\Lambda_{0,k}$ over $\Gamma_{0,k}$.

Moreover, we may identify $\Gamma_{0,0}$ with $T$ via the homeomorphism $\iota_{0,0}:\Gamma_{0,0} \to T$.
Denote by $\Lambda_{0,0}$ the trivial principal circle bundle $\mathbb{T} \times T$ over $T$.

For $k_1, k_2 \geq 1$, define $h_{(k_1,k_2),(k_1,k_2)}:=\id$.

For $1 \leq k_1< l_1,1 \leq k_2 < l_2$ with $k_1-k_2=l_1-l_2$, define
\[
h_{(l_1,l_2),(k_1,k_2)}:p_{l_1,l_2}^{-1}(\Gamma_{k_1,k_2} \cap \Gamma_{l_1,l_2}) \to p_{k_1,k_2}^{-1}(\Gamma_{k_1,k_2} \cap \Gamma_{l_1,l_2})
\]
as follows. For $(b_1,\dots,b_{l_1},\overline{b_1'},\dots,\overline{b_{l_2}'}) \in p_{l_1,l_2}^{-1}(\Gamma_{k_1,k_2} \cap \Gamma_{l_1,l_2})$, define
\begin{align*}
h_{(l_1,l_2),(k_1,k_2)}(b_1,\dots,b_{l_1},\overline{b_1'},\dots,\overline{b_{l_2}'}):=&(b_{k_1+1}/b_{k_2+1}') \cdots( b_{l_1}/b_{l_2}')\\&(b_1,\dots,b_{k_1},\overline{b_1'},\dots,\overline{b_{k_2}'}).
\end{align*}
It is routine to show that $h_{(l_1,l_2),(k_1,k_2)}$ is a homeomorphism;  its inverse
is given by
\[
h_{(k_1,k_2), (l_1,l_2)}(b_1,\dots,b_{k_1},\overline{b_1'},\dots, \overline{b_{k_2}'}) =
(b_1, \dots, b_{k_1}, c_1, \dots, c_j,
\overline{b_1'},\dots, \overline{b_{l_1}'}, \overline{c_1}, \dots, \overline{c_j})
\]
where $j := l_1 - k_1 = l_2 - k_2$; note that the formula does not depend on the choice of the $c_i$.
The formulas above give homeomorphisms for all $k_1,k_2,l_1,l_2 \geq 0$
with $k_1-k_2=l_1-l_2$.

It is straightforward to check that for $k_1,k_2,l_1,l_2,m_1,m_2 \geq 0$ with $k_1-k_2=l_1-l_2=m_1-m_2$, we have $p_{l_1,l_2} \circ h_{(k_1,k_2),(l_1,l_2)}=p_{k_1,k_2}$, and $h_{(l_1,l_2),(m_1,m_2)} \circ h_{(k_1,k_2),(l_1,l_2)}=h_{(k_1,k_2),(m_1,m_2)}$ on $p_{k_1,k_2}^{-1}(\Gamma_{k_1,k_2} \cap \Gamma_{l_1,l_2} \cap \Gamma_{m_1,m_2})$.
By Lemma~\ref{piece principal bundles open version}, we may construct a locally compact Hausdorff space $\Lambda_n$ for
$n \in \mathbb{Z}$ by

\[
\Lambda_n:=
\coprod_{\substack{ k_1,k_2 \geq 0\\ k_1-k_2= n}} \Lambda_{k_1,k_2}/ \sim
\]
where $\lambda \sim h_{(k_1,k_2),(l_1,l_2)}(\lambda)$
for all $\lambda \in p_{k_1,k_2}^{-1}(\Gamma_{k_1,k_2} \cap \Gamma_{l_1,l_2})\}$.
For $k_1,k_2,l_1,l_2 \geq 0$, if $k_1-k_2 \neq l_1-l_2$, then $\Gamma_{k_1,k_2} \cap \Gamma_{l_1,l_2}=\emptyset$. Observe that  $\Lambda:=\coprod_{n \in \mathbb{Z}}\Lambda_n$ is a locally compact Hausdorff space which we may view as a circle bundle over $\Gamma(T,\sigma)$ with bundle map $p':\Lambda \to \Gamma(T,\sigma)$ defined in the obvious way ($p'([\lambda]) = p_{k_1,k_2}(\lambda)$ where $\lambda \in \Lambda_{k_1,k_2}$).

Now we endow $\Lambda$ with a groupoid structure.
We define the range and source maps $r, s : \Lambda \to \Gamma^0$
$r(\lambda) = r_\Lambda(\lambda) = r(p'(\lambda))$ and $s(\lambda) = s_\Lambda(\lambda) = s(p'(\lambda))$
for $\lambda \in \Lambda$.
Now let $\lambda_1, \lambda_2 \in \Lambda$ such that $s(\lambda_1) = r(\lambda_2)$.
Then there exist
$k_i \geq 1$, for $i = 1, 2, 3$,
$(b_1,\dots,b_{k_1},\overline{b_1'},\dots,\overline{b_{k_2}'}) \in \Lambda_{k_1,k_2}$ and
$(b_1'',\dots,b_{k_2}'',\overline{b_1'''},\dots,\overline{b_{k_2}'''}) \in \Lambda_{k_2,k_3}$
such that $\lambda_1 = [(b_1,\dots,b_{k_1},\overline{b_1'},\dots,\overline{b_{k_2}'})]$ and
$\lambda_2 = [(b_1'',\dots,b_{k_2}'',\overline{b_1'''},\dots,\overline{b_{k_2}'''})]$ and
$p(b_1')=p(b_1'')$.
Define
\begin{align*}
\lambda_1\lambda_2 &=
[(b_1,\dots,b_{k_1},\overline{b_1'},\dots,\overline{b_{k_2}'})] \cdot
 [(b_1'',\dots,b_{k_2}'',\overline{b_1'''},\dots,\overline{b_{k_3}'''})] \\
  &:=[(b_1''/b_1')\cdots(b_{k_2}''/b_{k_2}')(b_1,\dots,b_{k_1},\overline{b_1'''},\dots,\overline{b_{k_3}'''})];
\end{align*}
and
\[
(b_1,\dots,b_{k_1},\overline{b_1'},\dots,\overline{b_{k_2}'})^{-1}:=(b_1',\dots,b_{k_2}',\overline{b_1},\dots,\overline{b_{k_1}}).
\]
It is straightforward to check that $\Lambda$ is a locally compact groupoid under these two operations with the unit space $\Lambda^0$ which is homeomorphic to $\Gamma^0$.

Define $i:\Gamma^0 \times \mathbb{T} \to \Lambda$ to be the embedding such that its image is $\Lambda_{0,0}$. Define $p':\Lambda \to \Gamma(T,\sigma)$ in the obvious way. Then Conditions~(\ref{i(T times Gamma^0)=p^{-1}(Gamma^0)})--(\ref{condition 3}) of Definition~\ref{defn of top twist} follow.

The rest of the proof is straightforward.
\end{proof}

By arguing along the lines of \cite[Theorem~3.1]{DeaconuKumjianEtAl:JOT01} it can be shown that
the topological twist $\Lambda$ in the above theorem is unique.

The following theorem is a generalization of \cite[Theorem~3.3]{DeaconuKumjianEtAl:JOT01}. In particular, we consider partial local homeomorphisms instead of local homeomorphisms.

\begin{thm}\label{The twisted top graph alg is iso to the twisted gpoid C*-alg}
Let $T$ be a locally compact Hausdorff space and let $\sigma:\dom(\sigma) \to \ran(\sigma)$ be a partial local homeomorphism. Define a topological graph $E:=(T,\dom(\sigma)$, $\id,\sigma)$. Fix a topological twist \[
\mathbb{T} \times \Gamma^0 \xrightarrow{i} \Lambda \xrightarrow{p} \Gamma(T,\sigma).
\]
Denote $\mathbf{B}:=j^*(\Lambda)$. Then the twisted topological graph algebra $\mathcal{O}(E,\mathbf{B})$ is isomorphic to the twisted groupoid $C^*$-algebra $C^*(\Gamma(T,\sigma),\Lambda)$.
\end{thm}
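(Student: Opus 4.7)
The plan is to construct a covariant Toeplitz representation $(\psi,\pi)$ of $X(E,\mathbf{B})$ inside $C^*(\Gamma(T,\sigma),\Lambda)$, invoke the universal property from Notation~\ref{define Cunz-Pimsner alg} to obtain a homomorphism $\rho:\mathcal{O}(E,\mathbf{B})\to C^*(\Gamma(T,\sigma),\Lambda)$, and then verify that $\rho$ is bijective. This follows the blueprint of \cite[Theorem~3.3]{DeaconuKumjianEtAl:JOT01}, adapted for a partial local homeomorphism: $\dom(\sigma)$ is generally a proper open subset of $T$, so $E_{\mathrm{rg}}^0$ is genuinely nontrivial and the pieces $\Gamma_{k_1,k_2}$ whose union is $\Gamma(T,\sigma)$ genuinely overlap as in Theorem~\ref{1-1 corr bet twisted groupoid and line bundle}.

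First I would identify $\mathbf{B}=j^*(\Lambda)$ with the open subset $p^{-1}(\Gamma_{1,0})\subset\Lambda$ (noting $\Gamma_{1,0}=j(\dom(\sigma))$) and define $\psi:C_c^e(\mathbf{B})\to C^*(\Gamma(T,\sigma),\Lambda)$ by extending equivariant sections by zero off $p^{-1}(\Gamma_{1,0})$; up to the customary complex conjugation, the equivariance condition on $\mathbf{B}$ matches the defining equivariance of Definition~\ref{define the twisted groupoid C^*-alg}. Take $\pi$ to be the embedding from Remark~\ref{C*(Gamma,Lambda) contains C_0(Gamma^0)}. The left-module identity $\psi(f\cdot x)=\pi(f)\psi(x)$ is a direct convolution computation on $p^{-1}(\Gamma^0)\cdot p^{-1}(\Gamma_{1,0})$. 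The inner-product identity $\psi(x)^*\psi(y)=\pi(\langle x,y\rangle_{C_0(E^0)})$ then follows from the fact that the convolution of functions supported in $p^{-1}(\Gamma_{0,1})$ and $p^{-1}(\Gamma_{1,0})$ is supported in $p^{-1}(\Gamma^0)$, with value at the unit over $v$ equal to a sum over the discrete fiber $\sigma^{-1}(v)$, reproducing $\sum_{\sigma(e)=v}\overline{x(e)}y(e)$.

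For covariance, I would use the decomposition $\phi(f)=\sum_{i=1}^n\Theta_{x_i,x_i}$ given by the proposition following Notation~\ref{define Cunz-Pimsner alg} (i.e., \cite[Proposition~3.10]{HuiLiTwisted1}): for $f\in C_c(E_{\mathrm{rg}}^0)$ the fiber $r^{-1}(\supp(f))$ is compact, so the partition of unity $\{h_i\}$ decomposes $f\circ r$ along this fiber and a direct computation yields $\sum_i\psi(x_i)\psi(x_i)^*=\pi(f)$ on $p^{-1}(\Gamma^0)$. Universality then produces $\rho$. For injectivity, I would appeal to the gauge-invariant uniqueness theorem for $\mathcal{O}(E,\mathbf{B})$ established in \cite{HuiLiTwisted1}: the $\mathbb{Z}$-grading $\Gamma(T,\sigma)=\coprod_n\Gamma_n$ with $\Gamma_n=\coprod_{k_1-k_2=n}\Gamma_{k_1,k_2}$ induces a canonical gauge action on $C^*(\Gamma(T,\sigma),\Lambda)$ intertwined by $\rho$ with the standard gauge action on $\mathcal{O}(E,\mathbf{B})$, and $\rho\circ j_{A,E}=\pi$ is injective by Remark~\ref{C*(Gamma,Lambda) contains C_0(Gamma^0)}.

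For surjectivity I would show that products $\psi(x_1)\cdots\psi(x_{k_1})\psi(y_1)^*\cdots\psi(y_{k_2})^*$ have dense linear span in the defining subalgebra of $C^*(\Gamma(T,\sigma),\Lambda)$. Using Theorem~\ref{1-1 corr bet twisted groupoid and line bundle} to identify $\Lambda_{k_1,k_2}$ with the pullback of $\mathbf{B}^{\star k_1}\star\overline{\mathbf{B}}^{\star k_2}$, such a product restricts on $p^{-1}(\Gamma_{k_1,k_2})$ to (the pullback of) $x_1\otimes\cdots\otimes x_{k_1}\otimes\overline{y_1}\otimes\cdots\otimes\overline{y_{k_2}}$, so a partition-of-unity plus Stone--Weierstrass argument on basic open sets $\mathcal{U}(U,V,k_1,k_2)$ finishes. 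I expect the main obstacle to be precisely this density step: it requires pushing the explicit bundle construction of $\Lambda$ through the convolution product to check that these products indeed exhaust the equivariant sections on every $\Gamma_{k_1,k_2}$, and the overlap structure supplied by Lemma~\ref{piece principal bundles open version} (together with the cocycle relations $h_{(l_1,l_2),(k_1,k_2)}$) must be handled consistently across all patches simultaneously.
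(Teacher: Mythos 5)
Your proposal is correct and follows essentially the same route as the paper: the same extension-by-zero representation $(\psi,\pi)$ with $\pi$ taken from Remark~\ref{C*(Gamma,Lambda) contains C_0(Gamma^0)}, the same convolution verification of the Toeplitz relations, covariance via rank-one decompositions of $\phi(f)$ for $f \in C_c(\dom(\sigma)) = C_0(E_{\mathrm{rg}}^0)$ (the paper reduces to a single $\Theta_{x,x}$ supported on one $s$-section rather than your finite sum from \cite[Proposition~3.10]{HuiLiTwisted1}), and the gauge-invariant uniqueness theorem applied to the gauge action coming from the canonical $\mathbb{Z}$-grading of $\Gamma(T,\sigma)$. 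The only divergence is in emphasis: the paper dismisses surjectivity as straightforward, whereas you spell out the density argument through the explicit bundle model of Theorem~\ref{1-1 corr bet twisted groupoid and line bundle}, which tacitly invokes the uniqueness of the twist over $\Gamma(T,\sigma)$ --- a fact the paper itself only asserts in passing --- so your added detail is consistent with, and no weaker than, the paper's treatment.
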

\begin{proof}
Denote $Q:\mathbb{T} \times \Gamma^0 \to \mathbb{T}$ the natural projection.
We may identify $\mathbf{B} = j^*(\Lambda)$ with $(p')^{-1}(j(\dom \sigma))$
which is a clopen subset of $\Lambda$.
Let $x$ be an equivariant complex-valued continuous function with compact support on $\mathbf{B}$;
then using the above identification and extending by zero yields an equivariant complex-valued continuous function with compact support on $\Lambda$ which we denote by $\psi(x)$.
It is straightforward to check that this yields a linear map $\psi$.
Let $\pi: C_0(T) \to C^*(\Gamma(T,\sigma),\Lambda)$ be the injective homomorphism as described in Remark~\ref{C*(Gamma,Lambda) contains C_0(Gamma^0)}.


Fix two equivariant complex-valued continuous function with compact support $x, y$ on $\mathbf{B}$, fix $h \in C_c(T)$, and fix $\lambda \in \Lambda$.
Let $\lambda \in \mathbf{B}$ and write $p(\lambda)=(t,1,\sigma(t))$. Then
\begin{align*}
\pi(h)*\psi(x)(\lambda)&=\pi(h)(\lambda\lambda^{-1})\psi(x)(\lambda)
\\&=Q \circ i^{-1}(\lambda\lambda^{-1})h(p(\lambda\lambda^{-1}))x(\lambda)
\\&=\psi(h \cdot x)(\lambda).
\end{align*}
So $\psi(h \cdot x)=\pi(h)*\psi(x)$. Now let $\lambda \in p^{-1}(T_{0,0})$ and write $p(\lambda)=(t,0,t)$. By Condition~\ref{i(T times Gamma^0)=p^{-1}(Gamma^0)} of Definition~\ref{defn of top twist}, $\lambda=i(z,t)$.
As in the convolution formula following Definition \ref{defn of top twist}
where $(e,1,\sigma(e)) \mapsto \lambda_e$ is a section of $p$ over the image of $j$
we compute
\begin{align*}
\psi(x)^**\psi(y)(\lambda)&=   
\sum_{\sigma(e)=t}\overline{x(\lambda_e\lambda^{-1})}y(\lambda_e)
\\&=\sum_{\sigma(e)=t}\overline{x(\overline{z} \cdot \lambda_e)}y(\lambda_e)
\\&=z\langle x,y\rangle_{C_0(T)}(t)
\\&=\pi(\langle x,y\rangle_{C_0(T)})(\lambda).
\end{align*}
So $\psi(x)^**\psi(y)=\pi(\langle x,y\rangle_{C_0(T)})$.
Hence $\psi$ is bounded with the unique extension $\psi$ to $X(E,\mathbf{B}_E)$, the twisted graph correspondence over $C_0(T)$ obtained as the completion of the equivariant complex-valued continuous functions with compact support on $\mathbf{B}$;
moreover, $(\psi,\pi)$ is an injective representation of $X(E,\mathbf{B}_E)$ in $C^*(\Gamma(T,\sigma),\Lambda)$.

Now we prove that $(\psi,\pi)$ is covariant. By Definition~\ref{define the top graph}, we have $E_{\mathrm{rg}}^0=E_{\mathrm{fin}}^0 \cap \overline{\dom(\sigma)}^\circ$.
By \cite[Lemma~1.22]{Katsura:TAMS04}, $E_{\mathrm{rg}}^0=\dom(\sigma)$.
By \cite[Proposition~3.10]{HuiLiTwisted1},
\[
\phi^{-1}(\mathcal{K}(X(E,\mathbf{B}))) \cap (\ker\phi)^\perp=C_0(E_{\mathrm{rg}}^0)=C_0(\dom(\sigma)).
\]
Fix a nonnegative function $f \in C_c(\dom(\sigma))$ such that $\sigma\vert_{\supp(f)}$ is injective and there is a continuous local section $\varphi:\supp(f) \to \mathbf{B}$.
In order to prove that $(\psi,\pi)$ is covariant, it is enough to show that $\psi^{(1)}(\phi(f))=\pi(f)$. By \cite[Lemma~4.63(c)]{RaeburnWilliams:Moritaequivalenceand98}, there exists a unique continuous map
\[
\tau:\{(\lambda,\lambda') \in \Lambda \times \Lambda:p(\lambda)=p(\lambda')\} \to \mathbb{T}
\]
such that $\tau(\lambda,\lambda')\cdot\lambda=\lambda'$. Define a map $x:\mathbf{B} \to \mathbb{C}$ by
\[
x(\lambda):= \begin{cases} \tau(\varphi(p(\lambda)),\lambda)\sqrt{f(p(\lambda))}, &\text{ if $\lambda \in p^{-1}(\supp(f))$}
   \\ 0, &\text{ otherwise}. \end{cases}
\]
It is straightforward to check that $x$ is an equivariant continuous function with compact support on $\mathbf{B}$, and $\phi(f)=\Theta_{x,x}$. Fix $\lambda \in p^{-1}(T_{0,0})$ and write $p(\lambda)=(t,0,t) \in \supp(f)$. Then
\begin{align*}
\psi(x)*\psi(x)^*(\lambda)&=\psi(x)(\lambda\lambda')\overline{\psi(x)(\lambda')}, \text{ where $p(\lambda')=(t,1,\sigma(t))$}
\\&=Q\circ i^{-1}(\lambda)f(p(\lambda))
\\&=\pi(f)(\lambda).
\end{align*}
So $(\psi,\pi)$ is covariant.

The existence of a $\mathbb{T}$-action $\beta$ on $C^*(\Gamma(T,\sigma),\Lambda)$ such that
$\beta_z(\pi(f)) = \pi(f)$ and $\beta_z(\psi(x))$ $= z\psi(x)$
for all $z \in \mathbb{T}$, $f \in C_0(T)$ and $x \in L_B$
follows by arguing as in \cite[Proposition~II.5.1]{Renault:groupoidapproachto80}.
It is straightforward to show that the $C^*$-algebra generated by the images of $\psi$ and $\pi$ exhausts
$C^*(\Gamma(T,\sigma),\Lambda)$.
Therefore by the gauge-invariant uniqueness theorem (see \cite[Theorem~6.4]{Katsura:JFA04}), the twisted topological graph algebra $\mathcal{O}(E,\mathbf{B})$ is isomorphic to the twisted groupoid $C^*$-algebra $C^*(\Gamma(T,\sigma),\Lambda)$.
\end{proof}

\section{Twisted Groupoid Models for Twisted Topological Graph Algebras}

In this section, we prove our main theorem.

\begin{lemma}\label{one-sided shift map is a local homeo}
Let $E$ be a topological graph. Denote by $\sigma:\partial E \setminus E_{\mathrm{sg}}^0 \to \partial E$ the one-sided shift map. Then $\sigma$ is a partial local homeomorphism on $\partial E$ with $\dom(\sigma)=\partial E \setminus E_{\mathrm{sg}}^0$.
\end{lemma}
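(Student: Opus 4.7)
The plan is to show that $\dom(\sigma) := \partial E \setminus E_{\mathrm{sg}}^0$ is open, that $\sigma: \dom(\sigma) \to \partial E$ is continuous, and that each point of $\dom(\sigma)$ admits an open neighborhood on which $\sigma$ restricts to a homeomorphism onto an open subset of $\partial E$. Openness of $\dom(\sigma)$ is immediate from the statement in Section~4 that $E_{\mathrm{sg}}^0$ is closed in $\partial E$.

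For the local-homeomorphism structure, fix $\mu \in \dom(\sigma)$, so $|\mu| \geq 1$. Since $s: E^1 \to E^0$ is a local homeomorphism, choose an open $s$-section $N \subset E^1$ containing $\mu_1$; then $s(N)$ is open in $E^0$. Set
\[
U := Z(N), \qquad V := r^{-1}(s(N)),
\]
where $r:\partial E \to E^0$ is the range map. Then $U$ is open by Definition~\ref{def top on partial E}, and because $N \subset E^1$ the basic-open-set criterion forces $i=1$, giving the explicit description $U = \{\nu \in \partial E : |\nu| \geq 1,\ \nu_1 \in N\} \subset \dom(\sigma)$. Continuity of $r$ (Lemma~\ref{convergent net in par E}(\ref{r(mu^n) to r(mu)})) and openness of $s(N)$ make $V$ open. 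I claim that the map
\[
V \to U, \qquad \nu \mapsto (s|_N)^{-1}(r(\nu))\cdot \nu,
\]
is the two-sided inverse of $\sigma|_U$: for $\nu \in V$, setting $e := (s|_N)^{-1}(r(\nu)) \in N$, a case check on whether $\nu$ is infinite, a nontrivial finite path, or a vertex in $E_{\mathrm{sg}}^0$ shows that $e\nu$ lies in $\partial E$ (indeed in $U$), with $\sigma(e\nu) = \nu$. Injectivity of $\sigma|_U$ follows from injectivity of $s|_N$ combined with the identity $\nu = \nu_1\sigma(\nu)$ (interpreting a vertex as a length-$0$ path when $|\nu|=1$).

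Continuity of $\sigma$ and of the candidate inverse both reduce to checking the three conditions (\ref{r(mu^n) to r(mu)})--(\ref{wandering condition}) of Lemma~\ref{convergent net in par E}. For $\sigma$: condition (\ref{r(mu^n) to r(mu)}) becomes $s(\mu_1^{(n)}) \to s(\mu_1)$, which follows from condition (\ref{mu^n_i to mu_i}) at $i=1$ for $\mu^{(n)} \to \mu$ and continuity of $s$; condition (\ref{mu^n_i to mu_i}) at level $j$ for $\sigma(\mu^{(n)}) \to \sigma(\mu)$ becomes the same condition at level $j+1$ for $\mu^{(n)} \to \mu$; and the wandering condition (\ref{wandering condition}) transfers verbatim, since $\sigma(\mu^{(n)})_{|\sigma(\mu)|+1} = \mu^{(n)}_{|\mu|+1}$. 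For the inverse: with $e_n := (s|_N)^{-1}(r(\nu^{(n)}))$ and $e := (s|_N)^{-1}(r(\nu))$, continuity of $r$ and $(s|_N)^{-1}$ yields $e_n \to e$, after which the three conditions for $e_n\nu^{(n)} \to e\nu$ reduce term-by-term to those for $\nu^{(n)} \to \nu$. The main obstacle will be the bookkeeping in this last translation, especially the wandering condition in the degenerate cases $|\mu|=1$ (where $\sigma(\mu) \in E_{\mathrm{sg}}^0$ is a length-$0$ path) and, on the inverse side, $|\nu|=0$; nothing deep happens, but one must be attentive to index shifts and to the distinction between length-$0$ and positive-length boundary paths.
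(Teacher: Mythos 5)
Your proposal is correct and takes essentially the same route as the paper: the paper's proof likewise picks an open $s$-section $N$ containing $\mu_1$ and asserts that $\sigma$ restricts to a homeomorphism of $Z(N)$ onto $Z(s(N))$ (which, since $s(N)\subset E^0$, is exactly your $V=r^{-1}(s(N))$), declaring the verification straightforward. Your explicit inverse $\nu \mapsto (s\vert_N)^{-1}(r(\nu))\nu$ and the continuity checks via the three conditions of Lemma~\ref{convergent net in par E} are precisely the omitted details, carried out correctly.
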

\begin{proof}
For $\mu \in \partial E \setminus E_{\mathrm{sg}}^0$, take an open $s$-section $U$ (see Page~5) containing $\mu_1$. Then we have $\sigma(Z(U))=Z(s(U))$. It is straightforward to check that the restriction of $\sigma$ to $Z(U)$ is a homeomorphism onto $Z(s(U))$ in the subspace topologies.
\end{proof}

By Lemma~\ref{one-sided shift map is a local homeo}, we can define a new topological graph.

\begin{defn}
Let $E$ be a topological graph. Define a topological graph as follows.
\[
\widehat{E}=(\widehat{E}^0,\widehat{E}^1,\widehat{r},\widehat{s}):=(\partial E,\partial E \setminus E_{\mathrm{sg}}^0,\iota,\sigma).
\]
\end{defn}

\begin{lemma}\label{proper maps boundary paths}
Let $E$ be a topological graph. Then the range map $r:\partial E \to E^0$ is a proper continuous surjection. Define a projection map $Q: \partial E \setminus E_{\mathrm{sg}}^0 \to E^1$ by $Q(\mu):=\mu_1$. Then $Q$ is also a proper continuous surjection.
\end{lemma}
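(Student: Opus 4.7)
The plan is to handle continuity and surjectivity of $r$ and $Q$ separately, by direct appeal to Lemma~\ref{convergent net in par E} together with a short iterative construction, and then to read off properness by matching $r^{-1}(K)$ and $Q^{-1}(K)$ with the basic compact sets $Z(K)$ arising from Definition~\ref{def top on partial E}.

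For continuity, if $\mu^{(n)} \to \mu$ in $\partial E$, condition (1) of Lemma~\ref{convergent net in par E} gives $r(\mu^{(n)}) \to r(\mu)$ at once, while condition (2) applied with $i = 1$ gives $Q(\mu^{(n)}) = \mu^{(n)}_1 \to \mu_1 = Q(\mu)$ whenever $\mu \in \partial E \setminus E_{\mathrm{sg}}^0$, since then $\vert\mu\vert \geq 1$. As $\partial E$ is second-countable, locally compact Hausdorff and hence metrizable, this sequential criterion is sufficient.

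For surjectivity of $r$, any $v \in E_{\mathrm{sg}}^0$ is already a boundary path with $r(v) = v$. For $v \in E_{\mathrm{rg}}^0$, I will invoke \cite[Proposition~2.8]{Katsura:TAMS04}, which yields a neighborhood $N$ of $v$ with $r^{-1}(N)$ compact and $r(r^{-1}(N)) = N$; in particular, there exists $e_1 \in E^1$ with $r(e_1) = v$. Iterating this on $s(e_n)$ whenever $s(e_n) \in E_{\mathrm{rg}}^0$, the procedure either terminates at some $s(e_k) \in E_{\mathrm{sg}}^0$, producing a finite boundary path $e_1 \cdots e_k \in \partial E$, or continues indefinitely, producing an element of $E^\infty \subset \partial E$ (which is a boundary path by Definition~\ref{def of boundary path of top graph}). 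In either case $v$ lies in the image of $r$. Surjectivity of $Q$ follows from the same extension procedure applied to $s(e)$ for a given $e \in E^1$ (taking $\mu = e$ directly if $s(e) \in E_{\mathrm{sg}}^0$).

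For properness, fix a compact $K \subset E^0$; since $E^0$ is clopen in $E^*$, $K$ is also compact in $E^*$, and $Z(K)$ is compact in $\partial E$ by the observation recorded after Definition~\ref{def top on partial E}. Unwinding the definition of $Z(K)$, a path $\mu$ lies in $Z(K)$ precisely when $r(\mu) \in K$ or $\mu_1 \cdots \mu_i \in K$ for some $i \geq 1$; the latter would place a path of positive length inside $E^0$, which is impossible, so $Z(K) = r^{-1}(K)$. For compact $K \subset E^1$ the analogous analysis rules out $r(\mu) \in K$ (as $r(\mu) \in E^0$) while $\mu_1 \cdots \mu_i \in K$ forces $i = 1$, giving $Z(K) = \{\mu \in \partial E : \mu_1 \in K\} = Q^{-1}(K)$, which is therefore compact. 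The only non-formal step is the iterative construction in the surjectivity argument, which crucially relies on \cite[Proposition~2.8]{Katsura:TAMS04} to produce an edge above every regular vertex; everything else is a direct unpacking of definitions.
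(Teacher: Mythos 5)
Your proposal is correct, and two of its three components coincide with the paper's proof: continuity is obtained exactly as in the paper from Conditions~(1) and~(2) of Lemma~\ref{convergent net in par E} (your added remark that $\partial E$ is second-countable locally compact Hausdorff, hence metrizable, so that the sequential criterion suffices, is a point the paper leaves implicit), and properness is obtained exactly as in the paper from the identifications $r^{-1}(K)=Z(K)$ and $Q^{-1}(K)=Z(K)$ together with compactness of $Z(K)$ for compact $K\subset E^*$ (the paper cites \cite[Proposition~3.15]{Yeend:JOT07} for this; you cite the observation recorded after Definition~\ref{def top on partial E}, which is the same fact, and you additionally spell out why no path of positive length can lie in $K\subset E^0$ and why $\mu_1\cdots\mu_i\in K\subset E^1$ forces $i=1$ --- verifications the paper asserts without comment). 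Where you genuinely diverge is surjectivity: the paper simply cites \cite[Lemma~1.4]{Katsura:ETDS06}, whereas you give a self-contained argument, partitioning $E^0=E_{\mathrm{rg}}^0\sqcup E_{\mathrm{sg}}^0$, noting that singular vertices are themselves boundary paths, and using \cite[Proposition~2.8]{Katsura:TAMS04} to produce an edge above each regular vertex, then iterating (with dependent choice) until the construction either halts at a singular vertex, yielding a finite boundary path, or runs forever, yielding an element of $E^\infty\subset\partial E$ by Definition~\ref{def of boundary path of top graph}; the case $Q$ follows by prepending a given edge $e$ to a boundary path rooted at $s(e)$, which is closed under such concatenation by the same definition. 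This is essentially Katsura's own proof of the cited lemma, so what you buy is a paper that is self-contained on this point at the cost of a few lines; the paper buys brevity by outsourcing it. No gaps.
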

\begin{proof}
First, we prove that $r$ is a proper continuous surjection. By Condition~(\ref{r(mu^n) to r(mu)}) of Lemma~\ref{convergent net in par E}, $r$ is continuous. By \cite[Lemma~1.4]{Katsura:ETDS06}, $r$ is surjective. For any compact subset $K \subset E^0$, we have $r^{-1}(K)$ is compact because $r^{-1}(K)=Z(K)$
(note $Z(K)$ is compact by \cite[Proposition 3.15]{Yeend:JOT07}). So $r$ is proper.

Now we prove that $Q$ is a proper continuous surjection. By Condition~(\ref{mu^n_i to mu_i}) of Lemma~\ref{convergent net in par E}, $Q$ is continuous. By \cite[Lemma~1.4]{Katsura:ETDS06}, $Q$ is surjective. For any compact subset $K \subset E^1$, we have $Q^{-1}(K)$ is compact because $Q^{-1}(K)=Z(K)$. So $Q$ is proper.
\end{proof}

Let $E$ be a topological graph and let $p:\mathbf{B} \to E^1$ be a principal circle bundle.
We get a principal circle bundle $Q^*(p):Q^*(\mathbf{B}) \to \partial E \setminus E_{\mathrm{sg}}^0$
which is the pullback bundle of $\mathbf{B}$ by $Q$.
Then there is a linear map $Q_*: X(E, B) \to X(\widehat{E}, Q^*(\mathbf{B}))$ obtained as the extension of the natural map $Q_*:C_c^e(\mathbf{B}) \to C_c^e(Q_*(\mathbf{B}))$ induced by $Q$ and a homomorphism $r_*^0:C_0(E^0) \to C_0(\partial E)$ induced from $r$.
Let $(j_X,j_A)$ be the universal covariant  representation of $X(E,\mathbf{B})$ in $\mathcal{O}(E,\mathbf{B})$, and let $(j_{X,\widehat{E}},j_{A,\widehat{E}})$ be the universal covariant  representation of $X(\widehat{E},Q^*(\mathbf{B}))$ in $\mathcal{O}(\widehat{E},Q^*(\mathbf{B}))$.

We next apply Proposition~\ref{homo bet twisted top graph alg} to obtain a homomorphism
$h:\mathcal{O}(E,\mathbf{B}) \to \mathcal{O}(\widehat{E},Q^*(\mathbf{B}))$.

\begin{lemma}\label{h is injective}
With notation as above the pair $(r,Q)$ defines a regular factor map from $\widehat{E}$ to $E$.
And the pair  $(j_{X,\widehat{E}} \circ Q_*,j_{A,\widehat{E}} \circ r_*)$ is a covariant  representation of
$X(E,\mathbf{B})$ in $\mathcal{O}(\widehat{E},Q^*(\mathbf{B}))$.  Hence there is a unique homomorphism
$h:\mathcal{O}(E,\mathbf{B}) \to \mathcal{O}(\widehat{E},Q^*(\mathbf{B}))$ such that
$h \circ j_X=j_{X,\widehat{E}} \circ Q_*$ and $h \circ j_A=j_{A,\widehat{E}} \circ r_*$.
Moreover, $h$ is injective.
\end{lemma}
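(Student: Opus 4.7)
The plan is to verify the hypotheses of Proposition~\ref{homo bet twisted top graph alg} for the pair $(r,Q)$ and then apply it. The bulk of the work is confirming that $(r,Q)$ is a regular factor map from $\widehat{E}$ to $E$; the representation statement, the existence of $h$, and its injectivity will then follow directly from that proposition together with Lemma~\ref{proper maps boundary paths}.

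First I would verify the two factor-map axioms. Properness and continuity of $r$ and $Q$ are already provided by Lemma~\ref{proper maps boundary paths}. The compatibility conditions $r_E \circ Q = r \circ \widehat{r}$ and $s_E \circ Q = r \circ \widehat{s}$ are immediate from the definitions: for $\mu \in \widehat{E}^1$ one has $Q(\mu) = \mu_1$, $\widehat{r}(\mu) = \mu$, and $\widehat{s}(\mu) = \sigma(\mu)$, so both identities read off trivially. For the unique-lifting axiom, given $e \in E^1$ and $u \in \partial E$ with $s(e) = r(u)$, the concatenation $f := eu$ is a boundary path of positive length — since either $u$ is infinite or $s(u) \in E_{\mathrm{sg}}^0$, the same holds for $eu$ — and hence lies in $\widehat{E}^1$; it clearly satisfies $Q(f) = e$ and $\sigma(f) = u$, and uniqueness is automatic since any such $f'$ must begin with $e$ and shift to $u$.

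Next I would check regularity via the criterion in Remark~\ref{special case of Katsura's factor map}: for every $u \in \partial E$ with $r(u) \in E_{\mathrm{rg}}^0$, one needs $\widehat{r}^{-1}(u) \neq \emptyset$. Since $\widehat{r}$ is the inclusion of $\widehat{E}^1 = \partial E \setminus E_{\mathrm{sg}}^0$ into $\partial E$, this reduces to showing that $u$ has positive length. If instead $u$ were the length-zero path at a vertex $v$, then by the definition of $\partial E$ we would have $v \in E_{\mathrm{sg}}^0$, giving $r(u) = v \in E_{\mathrm{sg}}^0$ and contradicting $r(u) \in E_{\mathrm{rg}}^0$. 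Hence $\widehat{r}^{-1}(u) = \{u\}$ is nonempty, and $(r,Q)$ is regular.

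Finally, since $Q^*(\mathbf{B})$ is by definition the pullback of $\mathbf{B}$ along $Q$, Proposition~\ref{homo bet twisted top graph alg} applies and produces both the covariant representation $(j_{X,\widehat{E}} \circ Q_*,\, j_{A,\widehat{E}} \circ r_*)$ and the unique homomorphism $h$ with the asserted intertwining properties, and moreover tells us that $h$ is injective precisely when $r$ is surjective — which is again part of Lemma~\ref{proper maps boundary paths}. The argument is essentially an assembly of prior results; the only step requiring a moment's care is the unique-lifting axiom, where one must confirm that the concatenation $eu$ genuinely lies in $\partial E$, but as noted this transfers directly from the tail behaviour of $u$.
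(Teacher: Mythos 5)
Your proposal is correct and follows the paper's proof in essentially the same way: establish that $(r,Q)$ is a regular factor map and then invoke Proposition~\ref{homo bet twisted top graph alg}, with injectivity coming from surjectivity of $r$ via Lemma~\ref{proper maps boundary paths}. The only (harmless) variation is in the regularity check, where you apply the criterion $r_{\widehat{E}}^{-1}(u)\neq\emptyset$ for $r(u)\in E_{\mathrm{rg}}^0$ from Remark~\ref{special case of Katsura's factor map}, while the paper computes $\widehat{E}_{\mathrm{rg}}^0=\dom\sigma=\widehat{E}^1$ and hence $r(\widehat{E}_{\mathrm{sg}}^0)=E_{\mathrm{sg}}^0$ directly; you also spell out the unique-lifting axiom ($f=e\nu$), which the paper leaves implicit.
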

\begin{proof}
By Lemma~\ref{proper maps boundary paths}  $(r,Q)$ defines a factor map from $\widehat{E}$ to $E$.
Note that
\[
\widehat{E}_{\mathrm{rg}}^0 = \dom\,\sigma = \widehat{E}^1 =\partial E \setminus E_{\mathrm{sg}}^0
\]
and so $\widehat{E}_{\mathrm{sg}}^0 = E_{\mathrm{sg}}^0$.
Hence, $r(\widehat{E}_{\mathrm{sg}}^0)  = E_{\mathrm{sg}}^0$ and so $(r,Q)$ is regular.
Therefore by Proposition~\ref{homo bet twisted top graph alg} the pair
$(j_{X,\widehat{E}} \circ Q_*,j_{A,\widehat{E}} \circ r_*)$ is a covariant representation of
$X(E,\mathbf{B})$ in $\mathcal{O}(\widehat{E},Q^*(\mathbf{B}))$ and there exists a unique map
$h:\mathcal{O}(E,\mathbf{B})$ $\to \mathcal{O}(\widehat{E},Q^*(\mathbf{B}))$ with the prescribed properties.
Since $r$ and $Q$ are both surjective, the injectivity of $h$ follows by the same result.
\end{proof}


The following theorem is inspired by \cite[Proposition~5.5]{Yeend:CM06}.

\begin{thm}\label{twisted top graph alg from SGDS}
The map $h:\mathcal{O}(E,\mathbf{B}) \to \mathcal{O}(\widehat{E},Q^*(\mathbf{B}))$
above is an isomorphism.
\end{thm}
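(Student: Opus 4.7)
By Lemma~\ref{h is injective} $h$ is already injective, so the plan is to establish surjectivity. Since $\mathcal{O}(\widehat{E}, Q^*(\mathbf{B}))$ is by definition generated as a $C^*$-algebra by the images of the universal covariant representation $(j_{X,\widehat{E}}, j_{A,\widehat{E}})$, it suffices to prove that both $j_{A,\widehat{E}}(C_0(\partial E))$ and $j_{X,\widehat{E}}(X(\widehat{E}, Q^*(\mathbf{B})))$ lie in the image of $h$.

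First I would handle the $X$-part by reducing it to the $A$-part. Because $Q^*(\mathbf{B})$ is the pullback of $\mathbf{B}$ along $Q$, it admits compatible local trivialisations, and a partition-of-unity argument over $\partial E \setminus E_{\mathrm{sg}}^0$ decomposes any $x \in C_c^e(Q^*(\mathbf{B}))$ as a finite sum $\sum_k f_k \cdot Q_*(\xi_k)$ with $f_k \in C_c(\partial E \setminus E_{\mathrm{sg}}^0)$ and $\xi_k \in C_c^e(\mathbf{B})$. Since $j_{X,\widehat{E}}(f_k \cdot Q_*(\xi_k)) = j_{A,\widehat{E}}(f_k) \cdot h(j_X(\xi_k))$, the whole problem reduces to showing $j_{A,\widehat{E}}(C_c(\partial E)) \subset h(\mathcal{O}(E, \mathbf{B}))$.

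For the $A$-part, I would split $f \in C_c(\partial E)$ as $f = f_1 + f_2$ with $f_1 \in C_c(\widehat{E}_{\mathrm{rg}}^0) = C_c(\partial E \setminus E_{\mathrm{sg}}^0)$ and $f_2$ supported in a neighbourhood of the closed set $E_{\mathrm{sg}}^0$. For $f_2$, Tietze extension lifts $f_2\vert_{E_{\mathrm{sg}}^0}$ to $\tilde{f}_2 \in C_c(E^0)$, so $h(j_A(\tilde{f}_2)) = j_{A,\widehat{E}}(r_*(\tilde{f}_2))$ accounts for $f_2$ modulo a residual function supported on $\widehat{E}_{\mathrm{rg}}^0$ that can be absorbed into $f_1$. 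For $f_1 \in J_{X(\widehat{E}, Q^*(\mathbf{B}))}$, Cuntz-Pimsner covariance in $\widehat{E}$ gives $j_{A,\widehat{E}}(f_1) = j_{X,\widehat{E}}^{(1)}(\phi_{\widehat{E}}(f_1))$, and the proposition after Notation~\ref{define Cunz-Pimsner alg}, applied to $\widehat{E}$, expresses $\phi_{\widehat{E}}(f_1) = \sum_i \Theta_{x_i, x_i}$ with explicit $x_i \in C_c^e(Q^*(\mathbf{B}))$, so $j_{A,\widehat{E}}(f_1) = \sum_i j_{X,\widehat{E}}(x_i) j_{X,\widehat{E}}(x_i)^*$.

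The hard part will be closing the apparent bootstrap: decomposing each $x_i$ above via the $X$-part paragraph yields new coefficient functions in $C_c(\widehat{E}_{\mathrm{rg}}^0)$ that must themselves be expressed through $h$. I expect this iteration to terminate after finitely many steps because $\supp f$ is compact in $\partial E$ and by Definition~\ref{def top on partial E} lies in some $Z(K)$ with $K \subset \coprod_{n=0}^{N} E^n$ of bounded length $N$; each iteration pushes the residual support one coordinate deeper along $\sigma$, so after $N$ iterations the residual coefficients depend only on a single edge variable in $E^1$ and factor through $r_*$ (modulo boundary terms on $E_{\mathrm{sg}}^0$ absorbed by the $f_2$ case). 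Making this termination and the accompanying uniform approximations precise, using Lemma~\ref{convergent net in par E} to control convergence in $\partial E$, is the principal technical obstacle.
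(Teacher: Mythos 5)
Your reduction of the $X$-part to the $A$-part is sound, and in fact mirrors the paper's own factorization (there one writes $x = Q_*(y)\cdot f$ with $f \in C_c(r^{-1}(s(U)))$, using the right action where you use the left); likewise your use of $\widehat{E}_{\mathrm{rg}}^0 = \partial E\setminus E_{\mathrm{sg}}^0$, covariance, and \cite[Proposition~3.10]{HuiLiTwisted1} to write $j_{A,\widehat{E}}(f_1)=\sum_i j_{X,\widehat{E}}(x_i)j_{X,\widehat{E}}(x_i)^*$ is exactly the mechanism the paper exploits. But the closing step has a genuine gap: the bootstrap you describe does not terminate, and the reason you give for termination is false. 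A function $f\in C_c(\partial E)$ with $\supp f\subset Z(K)$, $K\subset\coprod_{n=0}^{N}E^n$, is \emph{not} thereby a function of the first $N$ edge coordinates: $Z(K)$ contains every boundary path (including infinite ones) with a prefix in $K$, and $f$ may vary arbitrarily along the tail --- continuity in the topology of Lemma~\ref{convergent net in par E} imposes no eventual independence of the tail. Consequently each round of your iteration replaces a general element of $C_c(\partial E\setminus E_{\mathrm{sg}}^0)$ by new coefficient functions that are just as general (merely composed with $\sigma$), so there is no decreasing complexity measure, and the residual after $N$ steps need not factor through $r_*$. At this point your argument is circular: the $X$-part is reduced to the $A$-part and the $A$-part back to the $X$-part.

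The missing idea, and the paper's way of breaking the circle, is the Stone--Weierstrass theorem. Consider the closed $*$-subalgebra $\{g\in C_0(\partial E): j_{A,\widehat{E}}(g)\in h(\mathcal{O}(E,\mathbf{B}))\}$: it contains $r_*(C_0(E^0))$, hence vanishes nowhere on $\partial E$, and it separates points because finite products satisfy
\[
\Bigl(\prod_{i=1}^{n+1}h\circ j_X(x_i)\Bigr)\Bigl(\prod_{i=1}^{n+1}h\circ j_X(x_i)\Bigr)^*
= j_{A,\widehat{E}}\bigl(f_1\,(f_2\circ\sigma)\cdots(f_{n+1}\circ\sigma^{n})\bigr),
\]
and the paper checks in five cases (distinct ranges; $\mu\in E_{\mathrm{sg}}^0$ with $r(\nu)=\mu$; distinct first edges; $\mu$ a proper prefix of $\nu$; agreement up to the $n$-th edge) that such functions together with $r_*(C_0(E^0))$ separate any two distinct boundary paths. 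This yields all of $j_{A,\widehat{E}}(C_0(\partial E))$ at once --- making your Tietze splitting $f=f_1+f_2$ unnecessary --- after which your $X$-part reduction (the paper's second step) legitimately finishes the proof. Replacing your termination claim by this separation argument turns your outline into the paper's proof.
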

\begin{proof}
Since $h$ is injective by  Lemma \ref{h is injective} we need only show that $h$ is surjective.
It is sufficient to prove that the image of $h$ contains the images of
$j_{X,\widehat{E}}$ and $j_{A,\widehat{E}}$.

Firstly we show that the image of $h$ contains the image of $j_{A,\widehat{E}}$. By the Stone-Weierstrass Theorem, we only need to prove that for each $\mu \in \partial E$ there exists $f \in C_0(\partial E)$ satisfying $f(\mu) \neq 0$ and $j_{A,\widehat{E}}(f) \in h(\mathcal{O}(E,\mathbf{B}))$, and that the image of $h$ separates points of $\partial E$.

Fix $\mu \in \partial E$. By the Urysohn's Lemma, there exists $f \in C_0(E^0)$ such that $f(r(\mu))=1$. Then $j_{A,\widehat{E}} \circ r_*(f)=h \circ j_{A}(f) \in  h(\mathcal{O}(E,\mathbf{B}))$, and $r_*(f)(\mu)=f(r(\mu)) =1$.

Now we prove that $h$ separates points of $\partial E$. Fix distinct $\mu, \nu \in \partial E$.

Case $1$. $r(\mu) \neq r(\nu)$. Take an arbitrary $f \in C_0(E^0)$ such that $f(r(\mu)) \neq f(r(\nu))$. Then $j_{A,\widehat{E}} \circ r_*(f)=h \circ j_{A}(f) \in  h(\mathcal{O}(E,\mathbf{B}))$, and $r_*(f)(\mu) \neq r_*(f)(\nu)$.

Case $2$. $\mu \in E_{\mathrm{sg}}^0, \nu \notin E_{\mathrm{sg}}^0$, and $r(\nu)=\mu$. Take a precompact open $s$-section $U$ of $\nu_1$ which admits a local section $\varphi:U \to \mathbf{B}$. Take an arbitrary $x \in C_c^e(p^{-1}(U))$ such that $x$ does not vanish on the fibre $p^{-1}(\nu_1)$. Define $f: Q^{-1}(U) \to \mathbb{C}$ by $f(\alpha):=\vert x \circ \varphi(\alpha_1)\vert^2$. Then $f \in C_c(Q^{-1}(U))$. So
\begin{align*}
h \circ j_X(x) (h \circ j_X(x))^*&=j_{X,\widehat{E}} \circ Q_*(x)(j_{X,\widehat{E}} \circ Q_*(x))^*
\\&=j_{X,\widehat{E}}^{(1)}(\Theta_{Q_*(x),Q_*(x)})
\\&=j_{X,\widehat{E}}^{(1)}(\phi(f))
\\&=j_{A,\widehat{E}}(f) \text{ (By the covariance of $(j_{X,\widehat{E}},j_{A,\widehat{E}})$). }
\end{align*}
Notice that $f(\mu)=0$ and $f(\nu) \neq 0$.

Case $3$. $r(\mu)=r(\nu), \mu, \nu \notin E_{\mathrm{sg}}^0, \mu_1 \neq \nu_1$. Take a precompact open $s$-section $U$ of $\nu_1$ which does not contains $\mu_1$ and admits a local section $\varphi:U \to \mathbf{B}$. Take an arbitrary $x \in C_c^e(p^{-1}(U))$ such that $x$ does not vanish on the fibre $p^{-1}(\nu_1)$. Define $f: Q^{-1}(U) \to \mathbb{C}$ by $f(\alpha):=\vert x \circ \varphi(\alpha_1)\vert^2$. Then $f \in C_c(Q^{-1}(U))$. Similar arguments from Case~2 gives $j_{A,\widehat{E}}(f) \in h(\mathcal{O}(E,\mathbf{B}))$. Notice that $f(\mu)=0$ and $f(\nu) \neq 0$.

Case $4$. $\vert\mu\vert=n \geq 1, \vert\nu\vert \geq n+1,$ and $\mu=\nu_1\cdots\nu_n$. For $1 \leq i \leq n+1$. Take a precompact open $s$-section $U_i$ of $\nu_i$ which admits a local section $\varphi_i:U_i \to \mathbf{B}$. Take an arbitrary $x_i \in C_c^e(p^{-1}(U_i))$ such that $x_i$ does not vanish on the fibre $p^{-1}(\nu_i)$. Define $f_i: Q^{-1}(U_i) \to \mathbb{C}$ by $f_i(\alpha):=\vert x_i \circ \varphi_i(\alpha_1)\vert^2$. Then $f_i \in C_c(Q^{-1}(U_i))$. So
\begin{align*}
\Big(\prod_{i=1}^{n+1}h \circ j_X(x_i) \Big)\Big(\prod_{i=1}^{n+1}h \circ j_X(x_i)\Big)^*&=\Big(\prod_{i=1}^{n+1}j_{X,\widehat{E}} \circ Q_*(x_i) \Big)\Big(\prod_{i=1}^{n+1}j_{X,\widehat{E}} \circ Q_*(x_i)\Big)^*
\\&=j_{A,\widehat{E}}(f_1 \cdots (f_n \circ \sigma^{n-1})( f_{n+1} \circ \sigma^n)).
\end{align*}
Notice that $f_1 \cdots (f_n \circ \sigma^{n-1})( f_{n+1} \circ \sigma^n)(\mu)=0$ and $f_1 \cdots (f_n \circ \sigma^{n-1})( f_{n+1} \circ \sigma^n)$ $(\nu) \neq 0$.

Case $5$. $\vert\mu\vert, \vert\nu\vert \geq n+1 (n \geq 1), \mu_1 \cdots\mu_n=\nu_1 \cdots\nu_n$, and $\mu_{n+1}\neq\nu_{n+1}$. For $1 \leq i \leq n$. Take a precompact open $s$-section $U_i$ of $\nu_i$ which admits a local section $\varphi_i:U_i \to \mathbf{B}$. Take an arbitrary $x_i \in C_c^e(p^{-1}(U_i))$ such that $x_i$ does not vanish on the fibre $p^{-1}(\nu_i)$. Define $f_i: Q^{-1}(U_i) \to \mathbb{C}$ by $f_i(\alpha):=\vert x_i \circ \varphi_i(\alpha_1)\vert^2$. Then $f_i \in C_c(Q^{-1}(U_i))$. Take a precompact open $s$-section $U_{n+1}$ of $\nu_{n+1}$ which does not contain $\mu_{n+1}$ and admits a local section $\varphi_{n+1}:U_{n+1} \to \mathbf{B}$. Take an arbitrary $x_{n+1} \in C_c^e(p^{-1}(U_{n+1}))$ such that $x_{n+1}$ does not vanish on the fibre $p^{-1}(\nu_{n+1})$. Define $f_{n+1}: Q^{-1}(U_{n+1}) \to \mathbb{C}$ by $f_{n+1}(\alpha):=\vert x_{n+1} \circ \varphi_{n+1}(\alpha_1)\vert^2$. Then $f_{n+1} \in C_c(Q^{-1}(U_{n+1}))$. Similar arguments from Case~4 implies that
\begin{align*}
\Big(\prod_{i=1}^{n+1}h \circ j_X(x_i) \Big)\Big(\prod_{i=1}^{n+1}h \circ j_X(x_i)\Big)^*=j_{A,\widehat{E}}(f_1 \cdots (f_n \circ \sigma^{n-1})( f_{n+1} \circ \sigma^n)).
\end{align*}
Notice that $f_1 \cdots (f_n \circ \sigma^{n-1})( f_{n+1} \circ \sigma^n)(\mu)=0$ and $f_1 \cdots (f_n \circ \sigma^{n-1})( f_{n+1} \circ \sigma^n)$ $(\nu) \neq 0$.

Therefore we deduce that the image of $h$ separates points of $\partial E$, and that the image of $h$ contains the image of $j_{A,\widehat{E}}$.

Now we show that the image of $h$ contains the images of $j_{X,\widehat{E}}$. Fix $x \in C_c^e(Q^*(\mathbf{B}))$. Take a finite cover $\{U_i\}_{i=1}^{n}$ of $(Q \circ Q^*(P))(\supp (x))$ by precompact open $s$-sections such that for each $i$ there exists a local section $\varphi_i:U_i \to \mathbf{B}$. Take a finite collection $\{h_i\}_{i=1}^{n} \subset C_c(E^1)$ such that $\supp(h_i) \subset U_i, \sum_{i=1}^{n}h_i=1$ on $(Q \circ Q^*(P))(\supp (x))$. Since each $((Q \circ Q^*(P))_*(h_i))x \in  C_c^e(Q^*(\mathbf{B}))$ and $\sum_{i=1}^{n}((Q \circ Q^*(P))_*(h_i))x=x$, we may assume that $(Q \circ Q^*(P))(\supp (x))$ is contained in a precompact open $s$-section $U$ which admits a local section $\varphi:U \to \mathbf{B}$.

Take an arbitrary $y \in C_c^e(p^{-1}(U))$ such that $y(b)=b/\varphi(p(b))$ for all $b \in p^{-1}((Q \circ Q^*(P))(\supp(x)))$. Define $f:r^{-1}(s(U)) \to \mathbb{C}$ by $f(\mu):=x(\varphi \circ s\vert_U^{-1} \circ r(\mu),(s\vert_U^{-1} \circ r(\mu))\mu)$. Then $f \in C_c(r^{-1}(s(U)))$. We claim that $Q_*(y) \cdot f=x$. Fix $(b,e\nu) \in Q^*(\mathbf{B})$.

Case $1$. $(b,e\nu) \notin \supp(x)$. Then $x(b,e\nu)=0$. If $b \notin p^{-1}(U)$, then $Q_*(y)$ $(b,e\nu)=0$, so $(Q_*(y)\cdot f)(b,e\nu)=0$. If $b \in p^{-1}(U)$, then $\nu \in r^{-1}(s(U))$, so
\[
f(\nu)=x(\varphi \circ s\vert_U^{-1} \circ r(\nu),(s\vert_U^{-1} \circ r(\nu))\nu)=x(\varphi(e),e\nu)=(\varphi(e)/b) x(b,e\nu)=0.
\]

Case $2$. $(b,e\nu) \in \supp(x)$. We compute that
\[
(Q_*(y) \cdot f)(b,e\nu)=y(b)f(\nu)=(b/\varphi(e))(\varphi(e)/b) x(b,e\nu)=x(b,e\nu).
\]
So $Q_*(y) \cdot f=x$ and we finish proving the claim. Hence
\[
h(j_X(y))j_{A,\widehat{E}}(f)=j_{X,\widehat{E}}(Q_*(y))j_{A,\widehat{E}}(f)=j_{X,\widehat{E}}(x).
\]
Therefore the image of $h$ contains the image of $j_{X,\widehat{E}}$ because we just showed that the image of $h$ contains the image of $j_{A,\widehat{E}}$. We are done.
\end{proof}

Recall that by Lemma \ref{one-sided shift map is a local homeo}, the shift map $\sigma$ is
a partial  local homeomorphism on $\partial E$.
\begin{defn}\label{boundary-path gpoid}
The \emph{boundary path groupoid} of a topological graph $E$ is defined to be the Renault-Deaconu groupoid
$\Gamma(\partial E,\sigma)$ (see Definition~\ref{define D-R groupoid}).
\end{defn}

\begin{thm}\label{twisted top graph alg iso to twisted gpoid C*-alg}
Let $E$ be a topological graph and let $p:\mathbf{B} \to E^1$ be a principal circle bundle.
Let $Q^*(p):Q^*(\mathbf{B}) \to \partial E \setminus E_{\mathrm{sg}}^0$ be the pullback bundle of $\mathbf{B}$ by $Q$.
Denote by $j:\partial E \setminus E_{\mathrm{sg}}^0 \to \Gamma(\partial E,\sigma)$ the embedding such that $j(e \nu)=(e \nu,1,\nu)$ for all $e \in E^1, \nu \in \partial E$ with $s(e)=r(\nu)$.
Let $\Lambda$ be the topological twist $\Lambda$ over the boundary path groupoid $\Gamma(\partial E,\sigma)$
\[
\mathbb{T} \times \Gamma^0 \xrightarrow{i} \Lambda \xrightarrow{p'} \Gamma(\partial E,\sigma)
\]
such that $j^*(\Lambda) \cong Q^*(\mathbf{B})$
(see Theorem~\ref{1-1 corr bet twisted groupoid and line bundle}).
Then $\mathcal{O}(E,\mathbf{B})$ is isomorphic to the twisted groupoid $C^*$-algebra $C^*(\Gamma(\partial E,\sigma),\Lambda)$.
\end{thm}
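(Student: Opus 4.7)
The plan is to combine the two main previous theorems of the paper essentially verbatim. First, I would invoke Theorem~\ref{twisted top graph alg from SGDS}, which already produces an isomorphism
\[
\mathcal{O}(E,\mathbf{B}) \cong \mathcal{O}(\widehat{E}, Q^*(\mathbf{B})),
\]
where $\widehat{E} = (\partial E,\, \partial E \setminus E_{\mathrm{sg}}^0,\, \iota,\, \sigma)$. This reduces everything to analyzing $\mathcal{O}(\widehat{E}, Q^*(\mathbf{B}))$ on the right-hand side.

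Next, by Lemma~\ref{one-sided shift map is a local homeo} the shift map $\sigma$ is a partial local homeomorphism on $\partial E$ with domain $\partial E \setminus E_{\mathrm{sg}}^0$, so $\widehat{E}$ is precisely of the form $(T,\dom(\sigma),\id,\sigma)$ considered in Section~6 with $T = \partial E$. Thus Theorem~\ref{The twisted top graph alg is iso to the twisted gpoid C*-alg} applies directly to $\widehat{E}$ and to the principal circle bundle $Q^*(\mathbf{B}) \to \partial E \setminus E_{\mathrm{sg}}^0$, producing an isomorphism
\[
\mathcal{O}(\widehat{E}, Q^*(\mathbf{B})) \cong C^*(\Gamma(\partial E,\sigma), \Lambda'),
\]
for any topological twist $\Lambda'$ over $\Gamma(\partial E,\sigma)$ with $j^*(\Lambda') \cong Q^*(\mathbf{B})$.

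The only remaining point is to identify the twist $\Lambda'$ arising from this application with the specified twist $\Lambda$ named in the statement. This follows from the uniqueness remark immediately after Theorem~\ref{1-1 corr bet twisted groupoid and line bundle}: the topological twist over $\Gamma(\partial E,\sigma)$ whose pullback along $j$ is isomorphic to $Q^*(\mathbf{B})$ is unique up to isomorphism of twists. Such an isomorphism of twists induces an isomorphism of the corresponding twisted groupoid $C^*$-algebras in the obvious way (by composition on compactly supported equivariant functions). Composing the three isomorphisms
\[
\mathcal{O}(E,\mathbf{B}) \xrightarrow{\cong} \mathcal{O}(\widehat{E}, Q^*(\mathbf{B})) \xrightarrow{\cong} C^*(\Gamma(\partial E,\sigma), \Lambda') \xrightarrow{\cong} C^*(\Gamma(\partial E,\sigma), \Lambda)
\]
gives the theorem.

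There is no substantive obstacle; all of the real work has been done in Sections~5, 6, and in Theorem~\ref{twisted top graph alg from SGDS} of Section~7. The present result is the culmination where those pieces are slotted together. The one potentially subtle point, namely that the pullback $Q^*(\mathbf{B})$ of the bundle over $E^1$ is exactly the bundle over $\widehat{E}^1 = \partial E \setminus E_{\mathrm{sg}}^0$ needed to feed into Theorem~\ref{The twisted top graph alg is iso to the twisted gpoid C*-alg}, is built into the setup because the range map of $\widehat{E}$ is the identity and its source map is $\sigma$, so the twisted graph correspondence for $\widehat{E}$ is governed by a principal circle bundle over its edge space, which is exactly $Q^*(\mathbf{B})$.
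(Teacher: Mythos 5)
Your proposal is correct and takes essentially the same route as the paper, whose entire proof is the observation that the result follows directly by combining Theorems~\ref{The twisted top graph alg is iso to the twisted gpoid C*-alg} and~\ref{twisted top graph alg from SGDS}. Your additional care in invoking the uniqueness (up to isomorphism) of the twist noted after Theorem~\ref{1-1 corr bet twisted groupoid and line bundle} to identify $\Lambda'$ with $\Lambda$ merely makes explicit a detail the paper leaves implicit.
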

\begin{proof}
The result follows directly from Theorems~\ref{The twisted top graph alg is iso to the twisted gpoid C*-alg}, \ref{twisted top graph alg from SGDS}.
\end{proof}

\begin{eg}
In 1989 Rieffel introduced quantum Heisenberg manifolds $D_{\mu, \nu}^c$, where $\mu, \nu \in \RR$ and $c \in \NN$
as key examples of his deformation quantization theory (see \cite{Rieffel}).
Work of Abadie et al.\ (see \cite{Abadie-Exel-Eilers}) showed that each quantum Heisenberg manifolds
$D_{\mu, \nu}^c$ is isomorphic to a twisted topological graph  $C^*$-algebra $\Oo_{X(E, L)}$
(without using the language of topological graphs) with $E^0 = E^1 = \TT^2$, $r = \text{id}$, $s$
is translation by a parameter depending on $\mu, \nu \in \RR$
and $L$ is a Hermitian line bundle determined by the integer $c$.
Kang et al.\  (see \cite{MR3303906}) proved that $D_{\mu, \nu}^c$ is a twisted groupoid $C^*$-algebra.
\end{eg}

\section*{Appendix}

In this appendix, we provide an alternative proof of Theorem~\ref{1-1 corr bet twisted groupoid and line bundle} by using the cocycles approach.

Firstly, we can present the principal circle bundle in the following way. There exist an open cover $\{N_\alpha\}_{\alpha \in \Theta}$ of $\dom(\sigma)$ and a $1$-cocycle $\{s_{\alpha\beta}\}_{\alpha,\beta \in \Theta}$, such that
\[
\mathbf{B} \cong \amalg_{\alpha \in \Theta}(N_\alpha \times \mathbb{T})/(t,z,\alpha)\sim(t,zs_{\alpha\beta}(t),\beta).
\]
 For $k_1,k_2 \geq 1$, we have a principal circle bundle over $(\prod_{i=1}^{k_1}\dom(\sigma)) \times (\prod_{j=1}^{k_2}$ $\dom(\sigma))$
\begin{align*}
\amalg \Big( \Big(\prod_{i=1}^{k_1}N_{\alpha_i}\Big) \times \Big(\prod_{j=1}^{k_2}N_{\alpha_j '}\Big) \times &\mathbb{T}\Big) \Big/ (t_1,\dots,t_{k_1},t_{1}',\dots,t_{k_2}',z,\alpha_1,\dots,\alpha_{k_1},\alpha_{1}',\dots,\alpha_{k_2}') \sim \\&(t_1,\dots,t_{k_1},t_{1}',\dots,t_{k_2}',zs_{\alpha_1\beta_1}(t_1)\cdots s_{\alpha_{k_1}\beta_{k_1}}(t_{k_1})
\\&s_{\beta_{1}'\alpha_{1}'}(t_{1}')\cdots s_{\beta_{k_2}'\alpha_{k_2}'}(t_{k_2}'),\beta_1,\dots,\beta_{k_1},\beta_{1}',\dots,\beta_{k_2}').
\end{align*}
Notice that there is an embedding $\iota_{k_1,k_2}:\Gamma_{k_1,k_2} \to (\prod_{i=1}^{k_1}\dom(\sigma)) \times (\prod_{j=1}^{k_2}\dom$ $(\sigma))$ by sending $(t_1,k_1-k_2,t_2)$ to $(t_1,\dots,\sigma^{k_1-1}(t_1),t_2,\dots,\sigma^{k_2-1}(t_2))$ for all $t_1 \in \dom(\sigma^{k_1}), t_2 \in \dom(\sigma^{k_2})$. Define a principal circle bundle $p_{k_1,k_2}:\Lambda_{k_1,k_2} \to\Gamma_{k_1,k_2}$ to be the restriction of the above bundle to $\Gamma_{k_1,k_2}$, that is
\[
\Lambda_{k_1,k_2}:=\{(t_1,\dots,\sigma^{k_1-1}(t_1),t_2,\dots,\sigma^{k_2-1}(t_2),z,\alpha_1,\dots,\alpha_{k_1},\alpha_{1}',\dots,\alpha_{k_2}')\}.
\]

For $k \geq 1$, there are embeddings $\iota_{k,0}:\Gamma_{k,0} \to \prod_{i=1}^{k}\dom(\sigma); \iota_{0,k}:\Gamma_{0,k} \to \prod_{i=1}^{k}\dom(\sigma)$, and similarly we get principal circle bundles $\Lambda_{k,0}$ over $\Gamma_{k,0}$; $\Lambda_{0,k}$ over $\Gamma_{0,k}$.

Moreover, we regard $\Gamma_{0,0}$ as a copy of $T$ via the homeomorphism $\iota_{0,0}:\Gamma_{0,0} \to T$. Denote by $\Lambda_{0,0}$ the trivial principal circle bundle $T \times \mathbb{T}$ over $T$.

For $k_1, k_2 \geq 1$, define $h_{(k_1,k_2),(k_1,k_2)}:=\id$.

For $1 \leq k_1< l_1,1 \leq k_2 < l_2$ with $k_1-k_2=l_1-l_2$, define
\[
h_{(k_1,k_2),(l_1,l_2)}:p_{k_1,k_2}^{-1}(\Gamma_{k_1,k_2} \cap \Gamma_{l_1,l_2}) \to p_{l_1,l_2}^{-1}(\Gamma_{k_1,k_2} \cap \Gamma_{l_1,l_2})
\]
as follows. For any $(t_1,\dots,\sigma^{k_1-1}(t_1),t_2,\dots,\sigma^{k_2-1}(t_2),z,\alpha_1,\dots,\alpha_{k_1},\alpha_1',\dots, \alpha_{k_2}') \in p_{k_1,k_2}^{-1}$ $(\Gamma_{k_1,k_2}$ $\cap \Gamma_{l_1,l_2})$, choose arbitrary $\alpha_{k_1+1},\dots,\alpha_{l_1},\alpha_{k_2+1}',\dots,\alpha_{l_2}'$ such that $\sigma^{k_1-1+i}$ $(t_1) \in N_{\alpha_{k_1+i}}\cap N_{\alpha_{k_2+i}'}, i=1,\dots,l_1-k_1$. Define
\begin{align*}
&h_{(k_1,k_2),(l_1,l_2)}(t_1,\dots,\sigma^{k_1-1}(t_1),t_2,\dots,\sigma^{k_2-1}(t_2),z,\alpha_1,\dots,\alpha_{k_1},\alpha_1',\dots,\alpha_{k_2}'):=
\\&(t_1,\dots,\sigma^{l_1-1}(t_1),t_2,\dots,\sigma^{l_2-1}(t_2),zs_{\alpha_{k_2+1}'\alpha_{k_1+1}}(\sigma^{k_1}(t_1))\cdots s_{\alpha_{l_2}'\alpha_{l_1}}(\sigma^{l_1-1}(t_1)),
\\&\alpha_1,\dots,\alpha_{l_1},\alpha_1',\dots,\alpha_{l_2}').
\end{align*}
It is straightforward to prove that $h_{(k_1,k_2),(l_1,l_2)}$ is a homeomorphism. Denote its inverse by $h_{(l_1,l_2),(k_1,k_2)}$ with the formula given as follows. For $(t_1,\dots,\sigma^{l_1-1}(t_1),t_2$, $\dots,\sigma^{l_2-1}(t_2),z, \alpha_1$, $\dots, \alpha_{l_1},\alpha_1',\dots,\alpha_{l_2}') \in p_{l_1,l_2}^{-1}(\Gamma_{k_1,k_2} \cap \Gamma_{l_1,l_2})$,
\begin{align*}
&h_{(l_1,l_2),(k_1,k_2)}(t_1,\dots,\sigma^{l_1-1}(t_1),t_2,\dots,\sigma^{l_2-1}(t_2),z,\alpha_1,\dots,\alpha_{l_1},\alpha_1',\dots,\alpha_{l_2}')
\\&=(t_1,\dots,\sigma^{k_1-1}(t_1),t_2,\dots,\sigma^{k_2-1}(t_2),zs_{\alpha_{k_1+1}\alpha_{k_2+1}'}(\sigma^{k_1}(t_1))\cdots s_{\alpha_{l_1}\alpha_{l_2}'}(\sigma^{l_1-1}(t_1)),\\&\alpha_1,\dots,\alpha_{k_1},\alpha_1',\dots,\alpha_{k_2}').
\end{align*}

Similarly, for any $k_1,k_2,l_1,l_2 \geq 0$ with $k_1-k_2=l_1-l_2$, we are able to define a homeomorphism $h_{(k_1,k_2),(l_1,l_2)}$.

It is straightforward to check that for $k_1,k_2,l_1,l_2,m_1,m_2 \geq 0$ with $k_1-k_2=l_1-l_2=m_1-m_2$, we have $p_{l_1,l_2} \circ h_{(k_1,k_2),(l_1,l_2)}=p_{k_1,k_2}$, and $h_{(l_1,l_2),(m_1,m_2)} \circ h_{(k_1,k_2),(l_1,l_2)}=h_{(k_1,k_2),(m_1,m_2)}$ on $p_{k_1,k_2}^{-1}(\Gamma_{k_1,k_2} \cap \Gamma_{l_1,l_2} \cap \Gamma_{m_1,m_2})$.
By Lemma~\ref{piece principal bundles open version}, we may construct a locally compact Hausdorff space $\Lambda_z$ for
$z \in \mathbb{Z}$ by
\[
\Lambda_z:=\amalg_{\{k_1,k_2 \geq 0: k_1-k_2=z\}}\Lambda_{k_1,k_2}/\{\lambda \sim h_{(k_1,k_2),(l_1,l_2)}(\lambda):\lambda \in p_{k_1,k_2}^{-1}(\Gamma_{k_1,k_2} \cap \Gamma_{l_1,l_2})\}.
\]
For $k_1,k_2,l_1,l_2 \geq 0$, if $k_1-k_2 \neq l_1-l_2$, then $\Gamma_{k_1,k_2} \cap \Gamma_{l_1,l_2}=\emptyset$. So we get a locally compact Hausdorff space $\Lambda:=\amalg_{z \in \mathbb{Z}}\Lambda_z$.

Now we endow $\Lambda$ with a groupoid structure. For $k_i \geq 1, t_i \in \dom(\sigma^{k_i}),i=1,2,3$, for $z_1,z_2 \in \mathbb{T}$, suppose that $(t_1,\dots,\sigma^{k_1-1}(t_1)) \in \prod_{i=1}^{k_1}N_{\alpha_i},(t_2,\dots,\sigma^{k_2-1}$ $(t_2)) \in \prod_{i=1}^{k_2}(N_{\alpha_i'} \cap N_{\alpha_i''})$, and that $(t_3,\dots,\sigma^{k_3-1}(t_3)) \in \prod_{i=1}^{k_3}N_{\alpha_i'''}$, define
\begin{align*}
(t_1,&\dots,\sigma^{k_1-1}(t_1),t_2,\dots,\sigma^{k_2-1}(t_2),z_1,\alpha_1,\dots,\alpha_{k_1},\alpha_{1}',\dots,\alpha_{k_2}') \cdot
\\&(t_2,\dots,\sigma^{k_2-1}(t_2),t_3,\dots,\sigma^{k_3-1}(t_3),z_2,\alpha_1'',\dots,\alpha_{k_2}'',\alpha_{1}''',\dots,\alpha_{k_3}''')
\\&:=(t_1,\dots,\sigma^{k_1-1}(t_1),t_3,\dots,\sigma^{k_3-1}(t_3),z_1z_2s_{\alpha_1'' \alpha_1'}(t_2)\cdots s_{\alpha_{k_2}'' \alpha_{k_2}'}(\sigma^{k_2-1}(t_2)),
\\&\alpha_1,\dots,\alpha_{k_1},\alpha_{1}''',\dots,\alpha_{k_3}''');
\end{align*}
define
\begin{align*}
(t_1,\dots,&\sigma^{k_1-1}(t_1),t_2,\dots,\sigma^{k_2-1}(t_2),z_1,\alpha_1,\dots,\alpha_{k_1},\alpha_{1}',\dots,\alpha_{k_2}')^{-1}
\\&:=(t_2,\dots,\sigma^{k_2-1}(t_2),t_1,\dots,\sigma^{k_1-1}(t_1),\overline{z_1},\alpha_1',\dots,\alpha_{k_2}',\alpha_{k_1},\dots,\alpha_1).
\end{align*}
More simply,
\begin{align*}
(t_1,&\dots,\sigma^{k_1-1}(t_1),t_2,\dots,\sigma^{k_2-1}(t_2),z_1,\alpha_1,\dots,\alpha_{k_1},\alpha_{1}',\dots,\alpha_{k_2}') \cdot
\\&(t_2,\dots,\sigma^{k_2-1}(t_2),t_3,\dots,\sigma^{k_3-1}(t_3),z_2,\alpha_1',\dots,\alpha_{k_2}',\alpha_{1}''',\dots,\alpha_{k_3}''')
\\&:=(t_1,\dots,\sigma^{k_1-1}(t_1),t_3,\dots,\sigma^{k_3-1}(t_3),z_1z_2,\alpha_1,\dots,\alpha_{k_1},\alpha_{1}''',\dots,\alpha_{k_3}''').
\end{align*}
It is straightforward to check that $\Lambda$ is a locally compact groupoid under these two operations with the unit space $\Lambda^0$ which is homeomorphic to $\Gamma^0$. Define $i:\Gamma^0 \times \mathbb{T} \to \Lambda$ to be the embedding such that its image is $\Lambda_{0,0}$. Define $p':\Lambda \to \Gamma(T,\sigma)$ in the obvious way. Thus $\Lambda$ is the desired topological twist in Theorem~\ref{1-1 corr bet twisted groupoid and line bundle}.

In \cite{MR3303906} Kang et al.\  constructed $\Lambda$ by using cocycles for the case when $\sigma$ is a homemorphism and $T$ is a compact metric space.

\section*{Acknowledgments}

The second author would like to thank his PhD supervisors Professor David Pask and Professor Aidan Sims for supporting his trip to US to start this research with the first author. The second author in particular wants to thank the first author for lots of encouragements and for many helpful conversations. The second author also appreciates the hospitality of Department of Mathematics and Statistics, University of Nevada, Reno during his visit. The first author would like to thank the second author for all his hard work and for inviting him to work on this project. The authors appreciate the hospitality of Department of Mathematics, University of Wyoming during their visit.

\end{document}